\def\@tocline#1#2#3#4#5#6#7{\relax
  \ifnum #1>\c@tocdepth 
  \else
    \par \addpenalty\@secpenalty\addvspace{#2}%
    \begingroup \hyphenpenalty\@M
    \@ifempty{#4}{%
      \@tempdima\csname r@tocindent\number#1\endcsname\relax
    }{%
      \@tempdima#4\relax
    }%
    \parindent\z@ \leftskip#3\relax \advance\leftskip\@tempdima\relax
    \rightskip\@pnumwidth plus4em \parfillskip-\@pnumwidth
    #5\leavevmode\hskip-\@tempdima
      \ifcase #1
       \or\or \hskip 1em \or \hskip 2em \else \hskip 3em \fi%
      #6\nobreak\relax
    \dotfill\hbox to\@pnumwidth{\@tocpagenum{#7}}\par
    \nobreak
    \endgroup
  \fi}
\LetLtxMacro{\oldsqrt}{\sqrt}
\renewcommand{\sqrt}[2][]{\,\oldsqrt[#1]{#2}\,}
\def\bmu{\boldsymbol \mu}
\newcommand{\abs}[1]{\lvert #1 \rvert}
\newcommand{\zmod}[1]{\mathbb{Z}/ #1 \mathbb{Z}}
\newcommand{\dangle}[1]{\left\langle #1 \right\rangle}
\DeclareMathSymbol{\twoheadrightarrow} {\mathrel}{AMSa}{"10}
\DeclareMathOperator{\Gal}{Gal}
\DeclareMathOperator{\Tr}{Tr}
\DeclareMathOperator{\Nm}{N}  
\def\a{{\mathfrak a}} 
\def\d{{\mathfrak d}} 
\newcommand{\f}{\mathfrak{f}}
\newcommand{\p}{\mathfrak{p}}
\newcommand{\ff}{\mathbb{F}}
\newcommand{\nn}{\mathbb{N}}
\newcommand{\oo}{\mathbb{O}}
\newcommand{\qq}{\mathbb{Q}}
\newcommand{\zz}{\mathbb{Z}}
\newcommand{\OO}{\mathcal{O}}
\newcommand{\calO}{\mathcal{O}}
\DeclareMathOperator{\Mass}{Mass}
\DeclareMathOperator{\Ell}{Ell}
\newcommand{\grp}{\mathfrak{p}}
\newcommand{\grf}{\mathfrak{f}}
\def\makeop#1{\expandafter\def\csname#1\endcsname
  {\mathop{\rm #1}\nolimits}\ignorespaces}
\def\makebb#1{\expandafter\def
  \csname bb#1\endcsname{{\mathbb{#1}}}\ignorespaces}
\def\makebf#1{\expandafter\def\csname bf#1\endcsname{{\bf
      #1}}\ignorespaces} 
\def\makegr#1{\expandafter\def
  \csname gr#1\endcsname{{\mathfrak{#1}}}\ignorespaces}
\def\makescr#1{\expandafter\def
  \csname scr#1\endcsname{{\EuScript{#1}}}\ignorespaces}
\def\makecal#1{\expandafter\def\csname cal#1\endcsname{{\mathcal
      #1}}\ignorespaces} 
\def\doLetters#1{#1A #1B #1C #1D #1E #1F #1G #1H #1I #1J #1K #1L #1M
                 #1N #1O #1P #1Q #1R #1S #1T #1U #1V #1W #1X #1Y #1Z}
\def\doletters#1{#1a #1b #1c #1d #1e #1f #1g #1h #1i #1j #1k #1l #1m
                 #1n #1o #1p #1q #1r #1s #1t #1u #1v #1w #1x #1y #1z}
\def\Mass{{\rm Mass}}
\newcounter{thmcounter} 
\numberwithin{thmcounter}{section}
\newtheorem{lem}[thmcounter]{Lemma}
\newtheorem{prop}[thmcounter]{Proposition}
\theoremstyle{definition}
\newtheorem{sect}[thmcounter]{}
\numberwithin{equation}{section}
\newtheoremstyle{notitle}  
  {}
  {}
  {\itshape}
  {}
  {}
  {\ }
  {.5em}
  {}
\theoremstyle{notitle}
\title[Numerical invariants of imaginary quadratic orders]{Numerical
  Invariants of Totally Imaginary Quadratic $\zz[\sqrt{p}]$-orders}
\author{Jiangwei Xue,Tse-Chung Yang and Chia-Fu Yu}
\address{(Xue) Collaborative Innovation Centre of Mathematics, 
School of Mathematics and Statistics, Wuhan University, Luojiashan,
Wuhan, Hubei, 430072, P.R. China.}
\email{xue\_j@whu.edu.cn}
\address{(Yang) Institute of Mathematics, Academia Sinica,
  Astronomy-Mathematics Building, 6F, No. 1, Sec. 4, Roosevelt Road,
  Taipei 10617, TAIWAN.} 
\email{tsechung@math.sinica.edu.tw}
\address{(Yu) Institute of Mathematics,
  Academia Sinica and NCTS, Astronomy-Mathematics
  Building, No. 1, Sec. 4, Roosevelt Road, Taipei 10617, TAIWAN.}
\email{chiafu@math.sinica.edu.tw} \address{
  The Max-Planck-Institut f\"ur Mathematik \\
  Vivatsgasse 7, Bonn \\
  Germany 53111} \email{chiafu@mpim-bonn.mpg.de}
\begin{document}

\date{\today} \subjclass[2010]{11R52, 11G10} 
\keywords{class number formula, arithmetic of quaternion algebras.}

\begin{abstract}
  Let $A$ be a real quadratic order of discriminant $p$ or $4p$ with 
  a prime $p$. In this paper we classify all proper 
  totally imaginary quadratic $A$-orders
  $B$ with index $w(B)=[B^\times: A^\times]>1$. 
  We also calculate numerical invariants of these
  orders including the class number, the index $w(B)$  
  and the numbers of local optimal embeddings of these orders into
  quaternion orders. These numerical invariants are useful for
  computing the class numbers of totally definite quaternion 
  algebras.  
\end{abstract}

\maketitle




\section{Introduction}
\label{sec:intro}

Let $F$ be a totally real number field with the ring of integers
$O_F$. Let $D$ be a totally definite
quaternion algebra over $F$ and $\calO\subset D$ an $O_F$-order in 
$D$. A main interest in the arithmetic of quaternion algebras is to
compute the class number $h(\calO)$ 
of $\calO$ (for locally free ideal classes).
Eichler's class number formula 
states that
\begin{equation}
  \label{eq:class_f}
  h(\calO)=\Mass(\calO)+\Ell(\calO),
\end{equation}
where $\Mass(\calO)$ is the mass of $\calO$, 
which is (by definition) a weighted
sum over all the ideal classes of $\calO$, and $\Ell(\calO)$ is the
elliptic part of $h(\calO)$, which is expressed as follows:
\begin{equation}
  \label{eq:ell_f}
  \Ell(\calO)=\frac{1}{2} \sum_{w(B)>1} h(B) 
(1-w(B)^{-1}) \prod_{\grp} m_\grp(B).
\end{equation}      
In the summation 
$B$ runs through all (non-isomorphic) quadratic
$O_F$-orders such that the field $K$ of fractions 
can be embedded into $D$
and the index $w(B):=[B^\times:O_F^\times]>1$. The symbol $h(B)$ 
denotes the class number of $B$, and for any finite prime $\grp$ of $F$,
$m_\grp(B)$ is the number of equivalence classes of optimal embeddings
of $B_\grp:=B\otimes_{O_F} O_{F_\grp}$ into 
$\calO_\grp:=\calO \otimes_{O_F} O_{F_\grp}$. We refer to 
Eichler \cite{eichler:crelle55}, Vigneras \cite[Chapter V, Corollary 2.5,
p. 144]{vigneras} and K\"orner~\cite[Theorem 2]{korner:1987}) for more
details. 

One can use the mass formula 
(cf. \cite[Chapter V, Corollary 2.3]{vigneras} and 
\cite[Section 5]{xue-yang-yu:ECNF}) to compute $\Mass(\calO)$. 
When the order $\calO$ is not too complicated, for
example if $\calO$ is an Eichler order, 
the computation of numbers of local 
optimal embeddings is known by Eichler (cf.~\cite[p.~94]{vigneras}) and
Hijikata \cite[Theorem 2.3, p. 66]{hijikata:JMSJ1974}. 
Also see Pizer \cite[Sections 3-5]{pizer:2} for some extensions. 
A major difficulty in adapting Eichler's class number formula 
is to find all the quadratic $O_F$-orders $B$ with the properties
stated below (\ref{eq:ell_f}). 
It is not hard to see that the fraction field $K$ of $B$ must be 
totally imaginary over $F$ and the information whether $K$ can be
embedded into $D$ is already contained in local optimal embeddings.   

In this paper we classify all totally imaginary quadratic $O_F$-orders
$B$ with $w(B)>1$ in the case where $F=\bbQ(\sqrt{p})$ is a real
quadratic field with a prime number $p$. We also compute the class
number $h(B)$ and the index $w(B)$ of them. As a consequence of our
computations we obtain a formula for $h(\calO)$ for any Eichler order
$\calO$ of square-free level in an arbitrary totally definite
quaternion algebra over $\bbQ(\sqrt{p})$ (see Section~\ref{sec:hO}). 

Our motivation of computing the class number of quaternion orders
comes from the study of supersingular abelian surfaces over finite
fields. We are interested in finding an explicit formula for the
number $H(p)$ of isomorphism classes of (necessarily superspecial) 
abelian surfaces in the
isogeny class over the prime field $\bbF_p$ 
corresponding to the Weil $p$-number $\sqrt{p}$. 
The endomorphism algebras of these abelian varieties are isomorphic to 
the totally
definite quaternion algebra $D_{\infty_1,\infty_2}$ over 
$F=\bbQ(\sqrt{p})$ which is ramified only at the two real places. 
When $p=2$ or $p\equiv 3 \pmod 4$, the number $H(p)$ is equal to the
class number $h(\oo_1)$ of a maximal order $\oo_1$ in
$D_{\infty_1,\infty_2}$. When $p=1\pmod
4$, we show that $H(p)=h(\oo_1)+h(\oo_8)+h(\oo_{16})$, 
where $\oo_8$ and $\oo_{16}$ are certain proper 
$A=\zz[\sqrt{p}]$-suborders 
of $\oo_1$ of index $8$ and $16$, respectively. 
(We say  $\calO$ is a  ``proper'' $A$-order if
$\calO \cap F=A$.)
For the non-maximal cases the generalized class number formula
\cite[Theorem 1.5]{xue-yang-yu:ECNF}
requires to find all totally imaginary proper quadratic
$A$-orders $B$ with $w(B):=[B^\times:
A^\times]>1$ and compute the numerical invariants $h(B)$ and $w(B)$
again. 
These technical issues are dealt within this
paper. The results of this paper 
will be used in \cite{xue-yang-yu:ECNF} to compute the number $H(p)$ of
superspecial abelian surfaces.
See \cite[Theorem 1.2]{xue-yang-yu:ECNF} for the final formula for 
$H(p)$.     

The paper is organized as
follows. 
Section~\ref{sec:units-totally-imag} classifies all totally imaginary
quadratic fields $K$ over $F=\qq(\sqrt{p})$ with $w_K:=[O_K^\times:
O_F^\times]>1$. We express the class numbers $h(K)$ of these fields $K$ 
in terms of $h(F)$ and compute $w_K$. 
Section~\ref{sec:OF-orders} classifies all $O_F$-orders $B$ in $K$
with $w(B)>1$. We also compute the numerical invariants $h(B)$ and $w(B)$
of these orders. Section~\ref{sec:quadr-prop-zzsqrtp-orders} classifies 
all proper $A$-orders $B$ in $K$ with $w(B)>1$ when $p\equiv 1\pmod 4$.
We compute the same numerical invariants of them and the numbers of 
related local optimal embeddings mentioned above.   

\section{Totally imaginary quadratic extensions
  $K/F$}\label{sec:units-totally-imag}
\numberwithin{thmcounter}{section} In this section, we classify all
the totally imaginary quadratic extensions of $\qq(\sqrt{p})$ that
have strictly larger groups of units than $O_{\qq(\sqrt{p})}^\times$.
Throughout this section, $F$ denotes a totally real number field with
ring of integers $O_F$ and group of units $O_F^\times$, and $K$ always
denotes a totally imaginary quadratic extension of $F$. We write
$\bmu_K$ for the torsion subgroup of $O_K^\times$.  It is a finite
cyclic subgroup of $O_K^\times$ consisting of all the roots of unity
in $K$. Clearly, $\bmu_F=\{\pm 1\}$. The quotient groups
$O_F^\times/\bmu_F$ and $O_K^\times/\bmu_K$ are free abelian groups of
rank $[F:\qq]-1$ by the Dirichlet's Unit Theorem (cf. \cite[Theorem
I.7.4]{MR1697859}). 

\begin{sect}\label{subsec:intro-of-j-K-over-F}
  Since the free abelian groups $O_F^\times/\bmu_F$ and
  $O_K^\times/\bmu_K$ have the same rank, the natural embedding
$O_F^\times/\bmu_F\hookrightarrow  O_K^\times/\bmu_K  $
realizes $O_F^\times/\bmu_F$ as a subgroup of $O_K^\times/\bmu_K$ of finite index, called the Hasse unit index,
\begin{equation}
  \label{xeq:47}
Q_{K/F}:=[O_K^\times/\bmu_K: O_F^\times/\bmu_F]=[O_K^\times:\bmu_KO_F^\times].  
\end{equation}
In particular, $O_F^\times$ has finite index in $O_K^\times$.

Suppose that $\bmu_K=\dangle{\zeta_{2n}}$, where $\zeta_{2n}$ is a
primitive $2n$-th root of unity. Let $\iota: x\mapsto \iota(x)$ be the unique
nontrivial element of $\Gal(K/F)$.  By \cite[Theorem
4.12]{Washington-cyclotomic},
$Q_{K/F}$ is either 1 or 2. This can be seen in the following
way. There is a homomorphism $\phi_K$ whose image
contains $\bmu_K^2=\phi_K(\bmu_K)$:
\begin{equation}
  \label{xeq:51}
  \phi_K: O_K^\times\to \bmu_K, \qquad u\mapsto u/\iota(u).
\end{equation} One easily checks that
$\phi_K(u)\in \bmu_K^2$ if and only if $u\in \bmu_KO_F^\times$, hence
$Q_{K/F}=[\phi_K(O_K^\times):\bmu_K^2]\leq 2$. Moreover,  $Q_{K/F}=2$ if
and only if $\phi_K$ is surjective, i.e. there exists $z\in O_K^\times$ such that
\begin{equation}
  \label{xeq:52}
  z=\iota(z)\zeta_{2n}.
\end{equation}
We note that (\ref{xeq:51}) also implies that 
\begin{equation}\label{xeq:55}
  u^2\equiv \Nm_{K/F} (u) \pmod{\bmu_K}, \quad \forall u\in O_K^\times. 
\end{equation}

  Consider the quotient group $O_K^\times/O_F^\times$. If $Q_{K/F}=1$, then
$O_K^\times=\bmu_K O_F^\times$, and 
\begin{equation}\label{xeq:15} 
O_K^\times/O_F^\times\cong \bmu_K/\bmu_F=
\bmu_K/\{\pm 1\},
\end{equation} 
which is a cyclic group of order $n$ generated by the image of
$\zeta_{2n}$. 
If $Q_{K/F}=2$,  there is an exact sequence 
\begin{equation}\label{xeq:54}
1\to (\bmu_KO_F^\times)/O_F^\times\to O_K^\times/O_F^\times\to \bmu_K/\bmu_K^2\to 1.  
\end{equation}
Let $z\in O_K^\times$ be an element satisfying (\ref{xeq:52}). Then 
\begin{equation}\label{xeq:53}
  z^2=\Nm_{K/F}(z)\zeta_{2n},
\end{equation}
so $\zeta_{2n}\equiv z^2 \pmod{O_F^\times}$. Therefore, $O_K^\times/O_F^\times$ is a cyclic group of order $2n$ generated by the image of
$z$ in this case. Either way, $O_K^\times/O_F^\times$ is a cyclic group. Its order 
$w_K:=\abs{O_K^\times/O_F^\times}$ is given by 
\begin{equation}\label{xeq:14}
  w_K=\frac{1}{2}\abs{\bmu_K}\cdot Q_{K/F}=
  \begin{cases}
    \abs{\bmu_K}/2 &\qquad \text{ if } \quad Q_{K/F}=1;\\
    \abs{\bmu_K}  &\qquad \text{ if } \quad Q_{K/F}=2.\\
  \end{cases}
\end{equation}
\end{sect}

For the rest of this section, we assume that $F=\qq(\sqrt{d})$ is a
real quadratic field with square free $d\in \nn$. We will soon
specialize further to the case that $F=\qq(\sqrt{p})$ with a prime
$p\in \nn$. Recall that
\[O_F=
\begin{cases}
\zz\left[(1+\sqrt{d})/2\right] &\qquad   \text{ if } \quad d\equiv 1\phantom{,2}
\pmod{4}; \\
\zz[\sqrt{d}] &\qquad   \text{ if } \quad d\equiv 2, 3
\pmod{4}.\\
\end{cases}
\]
The \textit{fundamental unit} by definition is the unit
$\epsilon\in O_F^\times$ such that $O_F^\times=\{\pm \epsilon^a\mid
a\in \zz\}$ and 
$\epsilon>1$.  Note that $\epsilon$ is totally positive if and only if
$\Nm_{F/\qq}(\epsilon)=1$.


\begin{lem}\label{lem:norm-fund-unit-negative}
  Let $\epsilon$ be the fundamental unit of $F=\qq(\sqrt{d} )$, and
  $K$ a totally imaginary quadratic extension of $F$ with
  $\bmu_K=\dangle{\zeta_{2n}}$.  The index $Q_{K/F}=2$ if and only if
  $\Nm_{F/\qq}(\epsilon)=1$ and the equation
  \begin{equation}\label{xeq:12}
z^2= \epsilon\, \zeta_{2n}    
  \end{equation}
  has a solution in $K$.  In particular, if
  $\Nm_{F/\qq}(\epsilon)=-1$, then $Q_{K/F}=1$.
\end{lem}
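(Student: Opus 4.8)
The plan is to analyze the condition $Q_{K/F}=2$ via equation (\ref{xeq:52}): there exists $z\in O_K^\times$ with $z=\iota(z)\zeta_{2n}$. Taking norms $\Nm_{K/F}$ of both sides and using the relation (\ref{xeq:53}), i.e. $z^2 = \Nm_{K/F}(z)\zeta_{2n}$, I want to identify $\Nm_{K/F}(z)$ as (up to sign and torsion) the fundamental unit $\epsilon$, so that the solvability of $z = \iota(z)\zeta_{2n}$ becomes equivalent to the solvability of (\ref{xeq:12}). The key observation is that $\Nm_{K/F}(z)\in O_F^\times$, so $\Nm_{K/F}(z) = \pm\epsilon^a$ for some $a\in\zz$. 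First I would show $a$ must be odd: if $a$ were even, say $a=2b$, then replacing $z$ by $z/\epsilon^b$ (note $\epsilon^b\in O_F^\times$ is fixed by $\iota$, so it does not disturb equation (\ref{xeq:52})) reduces $\Nm_{K/F}(z)$ to $\pm 1$; but then $z^2 = \pm\zeta_{2n}\in\bmu_K$, forcing $z\in\bmu_K$, whence $\phi_K(z)=z/\iota(z)\in\bmu_K^2$, contradicting (\ref{xeq:52}) together with the fact (established in the excerpt) that $\phi_K(u)\in\bmu_K^2$ iff $u\in\bmu_KO_F^\times$ — indeed $z=\iota(z)\zeta_{2n}$ with $z\in\bmu_K$ would say $\phi_K(z)=\zeta_{2n}\notin\bmu_K^2$ is impossible since $\phi_K(\bmu_K)=\bmu_K^2$.

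With $a$ odd, write $a = 2b+1$; then $\Nm_{K/F}(z) = \pm\epsilon^{2b+1}$, and again replacing $z$ by $z/\epsilon^b$ (legitimate for the same reason) we may assume $\Nm_{K/F}(z) = \pm\epsilon$. Next I would pin down the sign. From $z^2 = \Nm_{K/F}(z)\zeta_{2n} = \pm\epsilon\zeta_{2n}$, if the sign is negative we would have $z^2 = -\epsilon\zeta_{2n} = \epsilon\cdot(-\zeta_{2n})$; but $-1 = \zeta_{2n}^n\in\bmu_K$, so $-\zeta_{2n} = \zeta_{2n}^{n+1}$ is again a root of unity in $\bmu_K$, hence still a power of $\zeta_{2n}$, so the negative-sign case is simply absorbed into a different choice of the root of unity and the statement of the lemma is unaffected (the lemma only asks for \emph{some} solution of $z^2=\epsilon\zeta_{2n}$ ranging over the generator $\zeta_{2n}$; more carefully, I would just state that (\ref{xeq:52}) for the generator $\zeta_{2n}$ is equivalent to (\ref{xeq:12}) holding for a possibly different primitive $2n$-th root of unity, and since $\bmu_K$ is cyclic of order $2n$ with all primitive roots of unity of the form $\zeta_{2n}^k$ for $k$ coprime to $2n$, and since if (\ref{xeq:52}) holds for $\zeta_{2n}$ it holds for every generator, the two conditions coincide). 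Then $\Nm_{F/\qq}(\Nm_{K/F}(z)) = \Nm_{K/\qq}(z) = |z|^2_{\text{all places}} > 0$ since $K$ is totally imaginary, so $\Nm_{F/\qq}(\pm\epsilon) > 0$; if $\Nm_{F/\qq}(\epsilon) = -1$ this forces the sign to be $-$, but we can still reabsorb as above, so in all cases $\Nm_{F/\qq}(\Nm_{K/F}(z))>0$ combined with $\Nm_{K/F}(z)=\pm\epsilon$ gives: \emph{if $\Nm_{F/\qq}(\epsilon)=-1$ then $Q_{K/F}=2$ is impossible} — because $\Nm_{K/F}(z)=\pm\epsilon$ would have $\Nm_{F/\qq}$ equal to $-1<0$, contradiction. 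This yields the final ``in particular'' clause.

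For the forward direction (the ``if'' part of the main equivalence), suppose $\Nm_{F/\qq}(\epsilon)=1$ and (\ref{xeq:12}) has a solution $z\in K$; I would first check $z\in O_K^\times$: since $\epsilon\zeta_{2n}$ is a unit in $O_K$ and $z^2$ equals it, $z$ is integral over $O_F$ and its inverse is $z\cdot(\epsilon\zeta_{2n})^{-1}$, also integral, so $z\in O_K^\times$. Then $z^2 = \epsilon\zeta_{2n}$, and applying $\iota$ and dividing gives $(z/\iota(z))^2 = \epsilon\zeta_{2n}/(\epsilon\iota(\zeta_{2n}))$. Now $\iota(\zeta_{2n})$ is a primitive $2n$-th root of unity (Galois action permutes roots of unity), and on $\bmu_K = \dangle{\zeta_{2n}}$ complex conjugation-type involution acts; using $\phi_K(\zeta_{2n}) = \zeta_{2n}/\iota(\zeta_{2n}) \in\bmu_K^2$ one computes $z/\iota(z)$ is a root of unity whose square is $\zeta_{2n}/\iota(\zeta_{2n}) = \zeta_{2n}^{2}$ (using $\iota(\zeta_{2n})=\zeta_{2n}^{-1}$, which holds because the only automorphism of $\bmu_K$ compatible with $K$ being a \textsc{CM}-type extension and fixing $\bmu_F=\{\pm1\}$ nontrivially is inversion — alternatively, $\iota(\zeta_{2n})\zeta_{2n}\in\bmu_F=\{\pm 1\}$ and it cannot be $-1$ unless $4\mid 2n$... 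I would handle the small cases $n\le 2$ separately). Hence $z/\iota(z) = \pm\zeta_{2n}$, and if it is $+\zeta_{2n}$ we are done with (\ref{xeq:52}); if $-\zeta_{2n}$, note $-\zeta_{2n} = \zeta_{2n}^{n+1}$, and I replace $z$ by $\zeta_{2n}^{-\lceil (n+1)/2\rceil} z$ or argue directly that $Q_{K/F}=2$ since $\phi_K(z) = -\zeta_{2n}\notin\bmu_K^2$ (as $n+1$ and $2$ have opposite parities when... again small-case bookkeeping), so $\phi_K$ is surjective and $Q_{K/F}=2$.

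The main obstacle I anticipate is the careful handling of the sign $\pm$ and the precise action $\iota(\zeta_{2n}) = \zeta_{2n}^{-1}$: one must verify that $\iota$ acts on $\bmu_K$ as inversion (equivalently that $\zeta_{2n}\iota(\zeta_{2n}) = 1$ rather than $-1$), which is where the assumption that $K/F$ is a quadratic extension with $F$ totally real and $K$ totally imaginary — so $\iota$ is ``complex conjugation'' at every archimedean place — does the work, but the argument is cleanest if one invokes that $\iota(\zeta) = \bar\zeta$ under any embedding $K\hookrightarrow\cc$, hence $\iota(\zeta_{2n}) = \zeta_{2n}^{-1}$ as abstract elements of $\bmu_K$. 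Once that is in hand, the replacements $z\mapsto z/\epsilon^b$ and the sign reabsorption via $-1=\zeta_{2n}^n$ are routine, and the logical structure (odd exponent $\Rightarrow$ reduce to $\Nm_{K/F}(z)=\pm\epsilon$ $\Rightarrow$ positivity of $\Nm_{F/\qq}$ forces $\Nm_{F/\qq}(\epsilon)=1$, and conversely solving (\ref{xeq:12}) builds the required $z$) closes the equivalence.
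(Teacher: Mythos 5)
Your overall strategy is the same as the paper's: characterize $Q_{K/F}=2$ by the existence of $z\in O_K^\times$ with $z=\iota(z)\zeta_{2n}$ as in (\ref{xeq:52}), observe $z^2=\Nm_{K/F}(z)\zeta_{2n}$ with $\Nm_{K/F}(z)\in O_F^\times$, normalize away even powers of $\epsilon$, and use positivity coming from $K$ being totally imaginary. The skeleton is sound, and your parity argument showing $\Nm_{K/F}(z)=\pm\epsilon^{a}$ with $a$ odd is a correct variant of the paper's use of (\ref{xeq:55}).

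The gap is in your handling of the residual sign. First, the ``reabsorption'' of $z^2=-\epsilon\zeta_{2n}$ into ``a different choice of the root of unity'' fails whenever $n$ is odd: there $-\zeta_{2n}=\zeta_{2n}^{n+1}$ has order $n$, so it is \emph{not} a generator of $\bmu_K$, and the equations $z^2=\epsilon\zeta_{2n}$ and $z^2=-\epsilon\zeta_{2n}$ are genuinely inequivalent (for $n=1$ they read $z^2=-\epsilon$ versus $z^2=\epsilon$, and the latter is never solvable in a totally imaginary $K$). The odd cases $n=1,3,5$ are precisely the ones the paper needs most. Second, the sentence ``if $\Nm_{F/\qq}(\epsilon)=-1$ this forces the sign to be $-$'' is false: $\Nm_{F/\qq}(-\epsilon)=\Nm_{F/\qq}(\epsilon)$ since $[F:\qq]=2$, so the norm down to $\qq$ cannot detect the sign. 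The correct and simple fix --- which is what the paper extracts from (\ref{xeq:55}) --- is that $\Nm_{K/F}(z)=z\,\iota(z)$ is \emph{totally positive} (it equals $\lvert\tau(z)\rvert^2$ at every archimedean place), not merely that $\Nm_{K/\qq}(z)>0$; total positivity forces $\epsilon$ to be totally positive, i.e.\ $\Nm_{F/\qq}(\epsilon)=1$, and simultaneously kills the minus sign because $-\epsilon$ is then totally negative. The same omission resurfaces in your sufficiency direction: when $\phi_K(z)=-\zeta_{2n}$ and $n$ is odd, one has $-\zeta_{2n}\in\bmu_K^2$, so surjectivity of $\phi_K$ does not follow; this case must be ruled out explicitly (writing $z=\zeta u$ with $\zeta\in\bmu_K$, $u\in O_F^\times$ leads to $u^2=-\epsilon$ or to $\zeta_{2n}\in\bmu_K^2$, both impossible), and you leave it as unfinished ``bookkeeping.''
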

\begin{proof}
  Only the first statement needs to be proved, as the second one
  follows easily.  The sufficiency is obvious. We prove the ``only
  if'' part.  Suppose that $Q_{K/F}=2$.  Let $z\in O_K^\times$ be a
  representative of a generator of $O_K^\times/\bmu_K\cong \zz$. By
  (\ref{xeq:55}), $O_F^\times/\bmu_F$ can be generated by a totally
  positive unit, namely $\Nm_{K/F}(z)$.  Therefore, $\epsilon$ must be
  totally positive, which happens if and only if
  $\Nm_{F/\qq}(\epsilon)=1$. Replacing $z$ by $1/z$ if necessary, we
  may assume $\Nm_{K/F}(z)=\epsilon$. By (\ref{xeq:54}), there exists
  an odd number $2c+1\in \zz$ such that $z=\iota(z)\zeta_{2n}^{2c+1}$.
  We further replace $z$ by $z\zeta_{2n}^{-c}$, then it satisfies
  equation (\ref{xeq:12}).
\end{proof}






\begin{sect}\label{subsec:list-of-roots-of-unity}
  Since $[K:\qq]=4$, we have $\varphi(2n)\leq 4$. The possible $n$'s
  are $1, 2,3, 4, 5, 6$.  Moreover, the cases $n=4, 5, 6$ can only
  happen in the following situations:
\begin{itemize}
\item if $n=4$, then $K=\qq(\zeta_8)=\qq(\sqrt{-1},\sqrt{2})$ and
  $F=\qq(\sqrt{2})$; 
\item if $n=5$, then $K=\qq(\zeta_{10})$ and   $F=\qq(\sqrt{5})$;
\item if $n=6$, then $K=\qq(\zeta_{12})=\qq(\sqrt{3}, \sqrt{-1})$ and
  $F=\qq(\sqrt{3})$. 
\end{itemize}
\end{sect}


\begin{lem}\label{lem:norm}
  Let $\epsilon$ be the fundamental unit of $F=\qq(\sqrt{p})$, where
  $p\in \nn$ is a prime number. Then $\Nm_{F/\qq}(\epsilon)=1$ if and
  only if $p\equiv 3\pmod{4}$.
\end{lem}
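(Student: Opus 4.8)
The plan is to analyze the equation $x^2 - py^2 = \pm 4$ (or $x^2 - py^2 = \pm 1$ when $p \equiv 1 \pmod 4$, depending on the shape of $O_F$) over $\zz$, which governs whether $\Nm_{F/\qq}(\epsilon) = -1$ can occur. The key classical fact I would invoke is that $\Nm_{F/\qq}(\epsilon) = -1$ holds if and only if the Pell-type equation $x^2 - p y^2 = -1$ (respectively the ``$-4$'' version) is solvable, equivalently if and only if the continued fraction expansion of $\sqrt{p}$ (or $(1+\sqrt{p})/2$) has odd period; but rather than invoking continued fractions, the cleanest route is a direct congruence/genus-theory argument, which I sketch below.

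First I would observe that if $p \equiv 3 \pmod 4$, then $-1$ is not a square modulo $p$, and more to the point I would argue directly: suppose $\Nm_{F/\qq}(\epsilon) = -1$. Write $\epsilon = (a + b\sqrt{p})/2$ with $a, b \in \zz$ of the same parity (this covers both cases of $O_F$ uniformly, since $p$ is odd so $d = p$ falls under $d \equiv 1$ or $d \equiv 3 \pmod 4$). Then $a^2 - p b^2 = -4$. Reducing modulo $4$: if $a, b$ are both even, say $a = 2a'$, $b = 2b'$, this gives $a'^2 - p b'^2 = -1$, so $a'^2 \equiv -1 \pmod p$ — impossible when $p \equiv 3 \pmod 4$. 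If $a, b$ are both odd, then $a^2 \equiv b^2 \equiv 1 \pmod 8$, so $a^2 - p b^2 \equiv 1 - p \pmod 8$; setting this equal to $-4$ forces $p \equiv 5 \pmod 8$, in particular $p \equiv 1 \pmod 4$, again contradicting $p \equiv 3 \pmod 4$. This proves the ``only if'' direction: $\Nm_{F/\qq}(\epsilon) = -1 \implies p \equiv 1 \pmod 4$, equivalently $p \equiv 3 \pmod 4 \implies \Nm_{F/\qq}(\epsilon) = 1$.

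For the converse — that $p \equiv 3 \pmod 4$ is the \emph{only} obstruction, i.e. $p \equiv 1 \pmod 4 \implies \Nm_{F/\qq}(\epsilon) = -1$ — I would use the standard argument via the narrow class group, or equivalently via genus theory for the real quadratic field of prime discriminant. When $p \equiv 1 \pmod 4$, the discriminant of $F = \qq(\sqrt{p})$ is $p$ itself, which is a prime; genus theory then says the narrow class group and the ordinary class group have $2$-ranks differing by the number of prime discriminant factors minus one, which here is $0$. Concretely, the narrow class number equals the class number, which forces the fundamental unit to have norm $-1$ (otherwise $(\sqrt{p})$ or the identity would furnish a nontrivial narrow ideal class that is trivial in the ordinary class group). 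I would cite a standard reference here, e.g. \cite[Theorem I.7.4]{MR1697859} together with the classical fact about prime discriminants, or simply cite the well-known result that for $p$ prime, $\Nm_{F/\qq}(\epsilon) = -1 \iff p \not\equiv 3 \pmod 4$ (for $p = 2$, note $\epsilon = 1 + \sqrt{2}$ has norm $-1$).

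The main obstacle is the converse direction: the ``only if'' part is an elementary congruence computation as above, but showing that $p \equiv 1 \pmod 4$ \emph{guarantees} norm $-1$ genuinely requires an input from class field theory / genus theory rather than a bare-hands manipulation, since no elementary argument produces the solution of $x^2 - py^2 = -4$. I would make sure to treat $p = 2$ separately (or note that $2 \equiv 2 \pmod 4$ so it is consistent with the statement via $\epsilon = 1 + \sqrt 2$, $\Nm(\epsilon) = -1$), and I would double-check the parity bookkeeping in the $d \equiv 1 \pmod 4$ case where $O_F = \zz[(1+\sqrt p)/2]$ so that the equation is really $x^2 - py^2 = \pm 4$ with $x \equiv y \pmod 2$.
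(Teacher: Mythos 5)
Your proposal is correct and follows essentially the same route as the paper: an elementary congruence argument (reducing modulo $p$ to show $-1$ is not a quadratic residue) rules out norm $-1$ when $p\equiv 3\pmod 4$, the case $p=2$ is handled by exhibiting $\epsilon=1+\sqrt{2}$, and the converse for $p\equiv 1\pmod 4$ is delegated to the classical genus-theory fact (the paper cites Conner--Hurrelbrink, Corollary 18.4bis, for exactly the statement you sketch). Your extra mod-$8$ bookkeeping for half-integral coordinates is harmless but unnecessary, since $O_F=\zz[\sqrt{p}]$ when $p\equiv 3\pmod 4$.
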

\begin{proof}
  If $p=2$, then $\epsilon=1+\sqrt{2}$, so
  $\Nm_{F/\qq}(\epsilon)=-1$.  By \cite[Corollary
  18.4bis, p. 134]{MR963648}, if $p\equiv 1 \pmod{4}$, the norm of the
  fundamental unit is $-1$. On the other hand, if $p\equiv 3
  \pmod{4}$, we claim that $\Nm_{F/\qq}(u)=1$ for any $u\in O_F^\times$.  Indeed,
  If $u=a+b\sqrt{p}$ has norm $-1$, then $a^2-b^2p=-1$.
  Modulo $p$ on both sides, we see that $-1$ is a square in
  $\zmod{p}$, contradicting to the assumption $p\equiv
  3\pmod{4}$.
\end{proof}

\begin{prop}\label{prop:sqrt-epsilon-and-sqrt-2}
  Suppose that $p\equiv 3 \pmod{4}$, and $\epsilon$ is the fundamental
  unit of $F=\qq(\sqrt{p})$. Then $\sqrt{\epsilon/2}\in F$, and
  $\sqrt{\epsilon/2} \equiv (1+\sqrt{p})/2 \pmod{O_F}$.
\end{prop}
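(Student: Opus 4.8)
The plan is to read everything off from the Pell-type equation satisfied by $\epsilon$. By Lemma~\ref{lem:norm} we have $\Nm_{F/\qq}(\epsilon)=1$, and since $p\equiv 3\pmod 4$ we have $O_F=\zz[\sqrt p]$; write $\epsilon=a+b\sqrt p$ with $a,b\in\zz$, so that $a^2-pb^2=1$, that is, $(a-1)(a+1)=pb^2$. One checks $a\geq 2$ (if $a=1$ then $b=0$ and $\epsilon=1$), and reducing $a^2-pb^2=1$ modulo $4$ shows that exactly one of $a,b$ is even. The crucial first step is to prove that $a$ is even and $b$ is odd.

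Suppose instead that $a$ is odd, hence $b$ is even and $a\geq 3$. Then $(a-1)/2$ and $(a+1)/2$ are coprime consecutive positive integers whose product is $p(b/2)^2$; since $p$ is an odd prime it divides exactly one of them, so unique factorization gives $\{(a-1)/2,(a+1)/2\}=\{v^2,\,pu^2\}$ for positive integers $u,v$ with $uv=\pm b/2$. After replacing $u$ by $-u$ if necessary,
\[
(v+u\sqrt p)^2=(v^2+pu^2)+2uv\sqrt p=a+b\sqrt p=\epsilon .
\]
But $\Nm_{F/\qq}(v+u\sqrt p)=v^2-pu^2=\pm 1$, so $v+u\sqrt p\in O_F^\times$; its image in the real embedding has absolute value $\sqrt\epsilon$, and since $1<\sqrt\epsilon<\epsilon$ one of $\pm(v+u\sqrt p)$ is a unit strictly between $1$ and $\epsilon$, contradicting the minimality of the fundamental unit $\epsilon$. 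Hence $a$ is even and $b$ is odd.

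Now $a-1$ and $a+1$ are coprime (both odd and differing by $2$) with product $pb^2$, so $p$ divides exactly one of them and, again by unique factorization, $\{a-1,a+1\}=\{v^2,\,pu^2\}$ with $uv=\pm b$. Adjusting the sign of $u$,
\[
(v+u\sqrt p)^2=(v^2+pu^2)+2uv\sqrt p=2a+2b\sqrt p=2\epsilon ,
\]
so $\sqrt{2\epsilon}=v+u\sqrt p\in O_F$ and therefore $\sqrt{\epsilon/2}=(v+u\sqrt p)/2\in F$. Finally, $|v^2-pu^2|=|(a+1)-(a-1)|=2$; since $p$ is odd this forces $v\equiv u\pmod 2$, and $u,v$ cannot both be even (otherwise $4\mid v^2-pu^2$), so $u$ and $v$ are both odd. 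Hence
\[
\sqrt{\epsilon/2}-\frac{1+\sqrt p}{2}=\frac{(v-1)+(u-1)\sqrt p}{2}\in\zz[\sqrt p]=O_F ,
\]
which is the asserted congruence; the sign ambiguity in $\sqrt{\epsilon/2}$ is harmless because $2\cdot\frac{1+\sqrt p}{2}\in O_F$. I expect the only genuine obstacle to be the parity claim of the second paragraph; once it is secured, the remainder is just bookkeeping with the factorization $(a-1)(a+1)=pb^2$.
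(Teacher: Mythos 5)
Your proof is correct, and it takes a genuinely different route from the paper's. The paper does not prove $\epsilon=2x^2$ itself: it cites this as a known fact (McCall--Parry--Ranalli, Zhang--Yue), and then obtains the congruence by a short structural argument, namely that $(2x)^2=2\epsilon\equiv 0\pmod{2O_F}$ while $2x\in O_F\setminus 2O_F$, and $1+\sqrt{p}$ is the unique nonzero nilpotent of $O_F/2O_F$, forcing $2x\equiv 1+\sqrt{p}\pmod{2O_F}$. You instead derive everything from scratch out of the Pell equation $a^2-pb^2=1$: the descent in your second paragraph (if $a$ were odd, the coprime factorization of $(a-1)(a+1)/4=p(b/2)^2$ would produce a unit of absolute value $\sqrt{\epsilon}$, contradicting minimality) is exactly the content of the cited lemma, and your explicit identification $\sqrt{2\epsilon}=v+u\sqrt{p}$ with $u,v$ both odd replaces the nilpotency argument. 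I checked the details — the coprimality claims, the unique-factorization step giving $\{a-1,a+1\}=\{v^2,pu^2\}$, the parity of $u,v$ from $\lvert v^2-pu^2\rvert=2$, and the harmlessness of the sign of the square root — and they all hold. What your approach buys is self-containedness and an explicit formula for $\sqrt{\epsilon/2}$ in terms of the Pell solution; what the paper's buys is brevity and a cleaner conceptual reason for the congruence (the ring $O_F/2O_F\cong\ff_2[t]/(t^2)$ has only one nonzero nilpotent). Either is acceptable.
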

\begin{proof}
  It is known that $\epsilon=2x^2$ for some $x\in F$ when $p\equiv 3
  \pmod{4}$ (cf. \cite[Lemma 3, p. 91]{MR1344833} or \cite[Lemma
  3.2(1)]{MR3157781}).  We have $(2x)^2=2\epsilon\equiv 0 \pmod{
    2O_F}$. Clearly, $2x\in O_F$ but $x\not \in O_F$. On the other
  hand, $1+\sqrt{p}$ is the only nonzero nilpotent element in
  $O_F/2O_F$. So we must have $2x\equiv 1+\sqrt{p} \pmod{2O_F}$, and
  the second part of the proposition follows.
\end{proof}

\begin{prop}\label{prop:ring-of-int-in-quad-ext}
  Suppose that $p\equiv 3 \pmod{4}$.  Let $\epsilon$ be the (totally
  positive) fundamental unit of $F=\qq(\sqrt{p})$, and
  $K=F(\sqrt{-\epsilon})$. Then
  $K=F(\sqrt{-2})=\qq(\sqrt{p},\sqrt{-2})$, and 
  $O_K=\zz[\sqrt{p}, \sqrt{-\epsilon}]$.
\end{prop}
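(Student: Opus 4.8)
The plan is to first recognise $K$ as the biquadratic field $\qq(\sqrt{p},\sqrt{-2})$, and then to establish $O_K=\zz[\sqrt{p},\sqrt{-\epsilon}]$ by showing that the a priori smaller order $R:=\zz[\sqrt{p},\sqrt{-\epsilon}]$ already has the same discriminant as $O_K$. For the field identification, Proposition~\ref{prop:sqrt-epsilon-and-sqrt-2} gives $\sqrt{\epsilon/2}\in F$, so $\epsilon=2\beta^{2}$ for some $\beta\in F^{\times}$; hence $-\epsilon=(-2)\beta^{2}$ and $K=F(\sqrt{-\epsilon})=F(\sqrt{-2})=\qq(\sqrt{p},\sqrt{-2})$. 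Since $\qq(\sqrt{p})\subset\rr$ while $\sqrt{-2}\notin\rr$, this is a genuine totally imaginary degree-$4$ extension of $\qq$ with $\Gal(K/\qq)\cong(\zz/2\zz)^{2}$, whose three quadratic subfields are $\qq(\sqrt{p})$, $\qq(\sqrt{-2})$ and $\qq(\sqrt{-2p})$.

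Next I set up the order. Because $p\equiv 3\pmod 4$ we have $O_F=\zz[\sqrt{p}]$, and by Lemma~\ref{lem:norm} $\Nm_{F/\qq}(\epsilon)=1$; thus $\epsilon$ is a root of $T^{2}-tT+1$ with $t=\Tr_{F/\qq}(\epsilon)\in\zz$, and $\sqrt{-\epsilon}$ is a root of the monic integral polynomial $T^{4}+tT^{2}+1$. In particular $\sqrt{-\epsilon}\in O_K^{\times}$, so $R:=\zz[\sqrt{p},\sqrt{-\epsilon}]=O_F[\sqrt{-\epsilon}]=O_F\oplus O_F\sqrt{-\epsilon}$ is an order in $K$, free of rank $4$ over $\zz$, and $R\subseteq O_K$. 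Since $\disc(R/\zz)=[O_K:R]^{2}\,\disc(O_K/\zz)$, it suffices to prove $\abs{\disc(R/\zz)}=\abs{\disc(O_K/\zz)}$.

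For the discriminant count: as $R=O_F[\sqrt{-\epsilon}]$ with $\sqrt{-\epsilon}$ of minimal polynomial $T^{2}+\epsilon$ over $F$, the relative discriminant is $\mathfrak{d}_{R/O_F}=\bigl(\disc(T^{2}+\epsilon)\bigr)=(-4\epsilon)=(4)$, the last step using that $\epsilon$ is a unit. The tower formula for discriminants then gives
\[
\disc(R/\zz)=\Nm_{F/\qq}\bigl((4)\bigr)\cdot\disc(O_F/\zz)^{2}=16\cdot(4p)^{2}=2^{8}p^{2},
\]
using $\disc(O_F/\zz)=4p$. On the other hand, the conductor--discriminant formula for the biquadratic field $K/\qq$ gives $\abs{\disc(O_K/\zz)}$ as the product of the absolute discriminants of its three quadratic subfields; since $p\equiv 3\pmod 4$ these are $\abs{\disc(\qq(\sqrt{p}))}=4p$, $\abs{\disc(\qq(\sqrt{-2}))}=8$ and $\abs{\disc(\qq(\sqrt{-2p}))}=8p$, so $\abs{\disc(O_K/\zz)}=4p\cdot 8\cdot 8p=2^{8}p^{2}$. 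The two values coincide, whence $[O_K:R]=1$ and $O_K=\zz[\sqrt{p},\sqrt{-\epsilon}]$.

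The argument is routine once organised this way, and there is no serious obstacle; the only places that demand care are the two auxiliary discriminant computations. For $\mathfrak{d}_{R/O_F}=(4)$ one must use $\epsilon\in O_F^{\times}$, so that $-4\epsilon$ and $4$ generate the same ideal of $O_F$; and in the conductor--discriminant evaluation the hypothesis $p\equiv 3\pmod 4$ enters precisely through $\disc(\qq(\sqrt{p}))=4p$ and $\disc(\qq(\sqrt{-2p}))=-8p$ (rather than $p$ and $-2p$). A purely local alternative to the last step — only the primes $2$ and $p$ can matter, $p$ is tamely ramified of degree $2$, and one checks directly that $O_F[\sqrt{-\epsilon}]$ is $2$-maximal — also works but is longer.
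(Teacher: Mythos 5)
Your proof is correct and follows essentially the same route as the paper's: identify $K=\qq(\sqrt{p},\sqrt{-2})$ via Proposition~\ref{prop:sqrt-epsilon-and-sqrt-2}, then show $\zz[\sqrt{p},\sqrt{-\epsilon}]=O_K$ by computing $\disc(O_F[\sqrt{-\epsilon}]/\zz)=\Nm_{F/\qq}(-4\epsilon)\cdot(4p)^2=2^8p^2$ via the tower formula and matching it against $\abs{\d_K}=2^8p^2$ obtained from the three quadratic subfields. The only cosmetic difference is that you invoke the conductor--discriminant formula where the paper cites an exercise in Marcus for the discriminant of a biquadratic field; these are the same computation.
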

\begin{proof}
  By Proposition~\ref{prop:sqrt-epsilon-and-sqrt-2}, $K=\qq(\sqrt{p},
  \sqrt{-2})$.  Let $B:=\zz[\sqrt{p},
  \sqrt{-\epsilon}]=O_F[\sqrt{-\epsilon}]\subseteq O_K$, and
  $\d_B=\d_{B/\zz}$ be the discriminant of $B$ with respect to
  $\zz$. To show that $B=O_K$, it is enough to show that $\d_B$
  coincides with $\d_{O_K}=\d_K$, the absolute discriminant of $K$. We
  have $\d_K=4p\cdot (-8)\cdot (-8p)=2^8p^2$ by Exercise 42(f) of
  \cite[Chapter 2]{MR0457396}. On the other hand, 
   \[\d_B=\d_F^2\cdot \Nm_{F/\qq}(\d_{B/O_F})=(4p)^2\cdot \Nm_{F/\qq}(-4\epsilon)=2^8p^2=\d_K. \]
   So indeed $O_K=\zz[\sqrt{p}, \sqrt{-\epsilon}]$.
\end{proof}

The following proposition determines $Q_{K/F}$ for any totally
imaginary quadratic extension $K$ of $F=\qq(\sqrt{p})$.

  \begin{prop}\label{prop:criterion-for-jK}
    Suppose $F=\qq(\sqrt{p})$. Then $Q_{K/F}=2$ if and only if $p\equiv 3
    \pmod{4}$,  and $K$ is either $F(\sqrt{-1})=\qq(\sqrt{p},
    \sqrt{-1})$ or $F(\sqrt{-\epsilon})=\qq(\sqrt{p}, \sqrt{-2})$. 
  \end{prop}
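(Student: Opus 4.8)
The plan is to reduce the claim to a solvability question and then run through the finite list of possible root-of-unity configurations. Combining Lemma~\ref{lem:norm-fund-unit-negative} with Lemma~\ref{lem:norm}, we get that $Q_{K/F}=2$ if and only if $p\equiv 3\pmod 4$ and the equation~(\ref{xeq:12}), $z^2=\epsilon\zeta_{2n}$, has a solution in $K$. So I would assume $p\equiv 3\pmod 4$ from the start and, by Proposition~\ref{prop:sqrt-epsilon-and-sqrt-2}, fix $x\in F$ with $\epsilon=2x^2$. Everything then comes down to showing that~(\ref{xeq:12}) is solvable in $K$ exactly when $K=F(\sqrt{-1})$ or $K=F(\sqrt{-\epsilon})$.

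For the ``if'' direction, first take $K=F(\sqrt{-\epsilon})$, which equals $\qq(\sqrt p,\sqrt{-2})$ by Proposition~\ref{prop:ring-of-int-in-quad-ext}. Its three quadratic subfields are $\qq(\sqrt p)$, $\qq(\sqrt{-2})$, $\qq(\sqrt{-2p})$, none of which is $\qq(\sqrt{-1})$ or $\qq(\sqrt{-3})$ since $p$ is an odd prime; hence $\bmu_K=\{\pm1\}$, so $\zeta_{2n}=-1$ and $z=\sqrt{-\epsilon}\in K$ solves~(\ref{xeq:12}). Next take $K=F(\sqrt{-1})$, so $i\in\bmu_K$. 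By the list in~\ref{subsec:list-of-roots-of-unity} together with $p\neq 2$, the only options are $n=2$ (so $\zeta_{2n}=i$) and, when $p=3$, $n=6$ (so $K=\qq(\zeta_{12})$). In either case $\zeta_{2n}/i$ is an even power of $\zeta_{2n}$, hence a square in $K$, so modulo $(K^\times)^2$ one has $\epsilon\zeta_{2n}\equiv\epsilon\, i=2x^2 i=(x(1+i))^2$; thus $\epsilon\zeta_{2n}$ is a square and~(\ref{xeq:12}) is solvable.

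For the ``only if'' direction, suppose~(\ref{xeq:12}) has a solution $z$; then $z\notin F$ and $K$ is generated over $F$ by a square root of $\epsilon\zeta_{2n}$. I would go through the values $n\in\{1,\dots,6\}$ permitted by~\ref{subsec:list-of-roots-of-unity}. The cases $n=4,5$ force $F=\qq(\sqrt 2)$ or $\qq(\sqrt 5)$, incompatible with $p\equiv 3\pmod 4$, while $n=6$ forces $p=3$ with $K=\qq(\zeta_{12})=F(\sqrt{-1})$. If $n=1$ then $\zeta_{2n}=-1$ and $K=F(\sqrt{-\epsilon})$ outright; if $n=2$ then $i\in\bmu_K$ gives $F(\sqrt{-1})\subseteq K$, hence $K=F(\sqrt{-1})$ by degrees. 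This leaves $n=3$, the case to exclude: here $\zeta_3\in\bmu_K$, so $K=F(\sqrt{-3})$, while $\zeta_6^3=-1$ gives $\zeta_6\equiv-1\pmod{(K^\times)^2}$, whence $\epsilon\zeta_6\equiv-\epsilon$; since $\epsilon\zeta_6$ is a square in $K$, so is $-\epsilon$, forcing $F(\sqrt{-\epsilon})\subseteq K$ and thus $F(\sqrt{-3})=F(\sqrt{-\epsilon})$. By Kummer theory this means $3\epsilon\in(F^\times)^2$, i.e. $6x^2\in(F^\times)^2$, i.e. $\sqrt 6\in\qq(\sqrt p)$ — impossible for a prime $p$. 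That contradiction kills $n=3$ and finishes the proof.

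The step I expect to be the main obstacle is exactly ruling out $n=3$, since a priori $K=\qq(\sqrt p,\sqrt{-3})$ looks no less plausible than the two fields in the statement. The key idea is the congruence $\zeta_6\equiv-1\pmod{(K^\times)^2}$, which collapses solvability of~(\ref{xeq:12}) to the condition $\sqrt{-\epsilon}\in K$ and so reduces everything to the already-understood field $F(\sqrt{-\epsilon})$; Proposition~\ref{prop:sqrt-epsilon-and-sqrt-2} then supplies the contradiction. A secondary nuisance is bookkeeping of $\bmu_K$ in each candidate field, including the exceptional behaviour at $p=3$ where $F(\sqrt{-1})=\qq(\zeta_{12})$ carries extra roots of unity; but since $p$ is an odd prime $\neq 6$ this is routine.
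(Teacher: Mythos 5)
Your proof is correct and follows essentially the same route as the paper: reduce via Lemmas~\ref{lem:norm-fund-unit-negative}, \ref{lem:norm} and Proposition~\ref{prop:sqrt-epsilon-and-sqrt-2} to the solvability of $z^2=\epsilon\zeta_{2n}$ (the paper's equivalent form $y^2=2\zeta_{2n}$), then run through the possible values of $n$, with the identity $2i=(1+i)^2$ handling $n=2$ (and $n=6$) and the observation that solvability for $n=3$ would force $\sqrt{-2}\in\qq(\sqrt{p},\sqrt{-3})$ giving the contradiction. Your ``mod squares'' bookkeeping and the Kummer-theoretic phrasing of the $n=3$ exclusion are only cosmetic variants of the paper's explicit element computations.
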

  \begin{proof}
    By Lemma~\ref{lem:norm-fund-unit-negative} and Lemma~\ref{lem:norm}, $Q_{K/F}=1$ for all $K$
    if $p=2$ or $p\equiv 1\pmod{4}$.  Assume that $p\equiv 3 \pmod{4}$
    for the rest of the proof. Combining
    Lemma~\ref{lem:norm-fund-unit-negative} and
    Proposition~\ref{prop:sqrt-epsilon-and-sqrt-2}, we see that
    $Q_{K/F}=2$ if and only if the equation
\begin{equation}\label{xeq:13}
  y^2=2\zeta_{2n}
\end{equation}
has a solution in $K$. By
Section~\ref{subsec:list-of-roots-of-unity}, the possible values of
$n$ are $6, 3, 2, 1$. 

If $n=6$, then $p=3$ and $K=\qq(\zeta_{12})=\qq(\sqrt{3},
\sqrt{-1})$. We claim that $\qq(\sqrt{2}\zeta_{24})=K$. Indeed,
$\qq(\sqrt{2}\zeta_{24})=\qq(\zeta_3, \sqrt{2}\zeta_8)$. Since
$\zeta_8=\frac{\sqrt{2}}{2}+\frac{\sqrt{-2}}{2}$, our claim follows.
Therefore, (\ref{xeq:13}) has a solution in $K$ and $Q_{K/F}=2$ in
this case.

Assume that $p>3$ for the rest of the proof. 

If $n=3$, then $K=\qq(\sqrt{p}, \sqrt{-3})$. If $\sqrt{2}\zeta_{12}\in
K$, then it implies that $\sqrt{-2}=\sqrt{2}\zeta_4\in K$, which is
clearly false. Therefore, $Q_{K/F}=1$ if $K=\qq(\sqrt{p}, \sqrt{-3})$ with
$p>3$.

If $n=2$, then $K=\qq(\sqrt{p}, \sqrt{-1})$. We have $(1+\sqrt{-1})^2=
2\sqrt{-1}=2\zeta_4$. Therefore, $Q_{K/F}=2$ in this case. 

Lastly, suppose that $n=1$. Then $Q_{K/F}=2$ implies that
$K=F(\sqrt{-2})=\qq(\sqrt{p}, \sqrt{-2})$. One easily checks that
$\bmu_K$ is indeed $\{\pm 1\}$ so this is also  sufficient for $Q_{K/F}=2$.
\end{proof}

In the case where $F=\qq(\sqrt{d})$ is an arbitrary real quadratic
field and $K$ is an imaginary bicyclic biquadratic field containing
$F$, the calculation of $Q_{K/F}$ is discussed in \cite[Section
2]{MR0441914}.

\begin{sect}\label{subsec:quotient-units-gp-cyclic}
  The following table gives a complete list of the extensions
  $K/\qq(\sqrt{p})$ with $w_K=[O_K^\times:O_{\qq(\sqrt{p})}^\times]>1$ for all
  primes $p$.

\bigskip

\renewcommand{\arraystretch}{1.3}
\noindent\begin{tabular}{|>{$}c<{$}|>{$}c<{$}|>{$}c<{$}||>{$}c<{$}|>{$}c<{$}|>{$}c<{$}||>{$}c<{$}|>{$}c<{$}|>{$}c<{$}|}
\hline
p & K & w_K& p & K & w_K & p>5 & K & w_K\\
\hline
\multirow{2}{*}{2} & \qq(\sqrt{2}, \sqrt{-1}) & 4 &\multirow{3}{*}{5} &
\qq(\sqrt{5}, \sqrt{-1}) & 2 & \multirow{2}{*}{$p\equiv 1\; (4)$}&
\qq(\sqrt{p},\sqrt{-1}) & 2\\
                   & \qq(\sqrt{2}, \sqrt{-3}) & 3& & \qq(\sqrt{5},
                   \sqrt{-3}) & 3 & &\qq(\sqrt{p},\sqrt{-3})& 3 \\ 
\cline{1-3} \cline{7-9} 
 \multirow{2}{*}{3} & \qq(\sqrt{3}, \sqrt{-1}) & 12& &\qq(\zeta_{10})
 & 5& \multirow{3}{*}{$p \equiv 3 \; (4)$} & \qq(\sqrt{p},\sqrt{-1}) & 4\\
     & \qq(\sqrt{3}, \sqrt{-2}) & 2& & & & & \qq(\sqrt{p}, \sqrt{-2})
     & 2\\
& & & & & & & \qq(\sqrt{p}, \sqrt{-3}) & 3\\
   \hline
\end{tabular}

\bigskip

It is well known that the class numbers (cf. \cite[Theorem
11.1]{Washington-cyclotomic})
\begin{equation}
  \label{xeq:48}
  h(\qq(\zeta_8))=h(\qq(\zeta_{10}))=h(\qq(\zeta_{12}))=1.
\end{equation}
Using Magma \cite{MR1484478}, one easily calculates that 
\begin{gather}
  h(\qq(\sqrt{2}, \sqrt{-3} ))=h(\qq(\sqrt{5},
  \sqrt{-1} ))=h(\qq(\sqrt{5}, \sqrt{-3} ))=1, \label{xeq:49}\\ 
  h(\qq(\sqrt{3}, \sqrt{-2} ))=2. \label{xeq:50}
\end{gather}
\end{sect}



\begin{sect}\label{subsec:disc-criterion-for-jK}
  Let $E_j=\qq(\sqrt{-j})$ for $j=1,2,3$, and $\d_{E_j}$ be the
  discriminant of $E_j$. Suppose that $p$ is \textit{odd}, and $\d_F$
  is the discriminant of $F=\qq(\sqrt{p})$. Consider the biquadratic
  field $K_j:=\qq(\sqrt{p}, \sqrt{-j})$, which is the compositum
  of $F$ with $E_j$. If $p=3$, we only take
  $K_1$ and $K_2$.  Proposition~\ref{prop:criterion-for-jK}
  shows the following simple but mysterious criterion:
  \begin{equation}
    \label{xeq:16}
 Q_{K_j/F}=1 \qquad \Longleftrightarrow\qquad \gcd(\d_F, \d_{E_j})=1.    
  \end{equation}
\end{sect}

\begin{sect}\label{subsec:Herglotz-formula}
  Suppose for the moment that $F=\qq(\sqrt{d})$ is an arbitrary real
  quadratic field, and $K$ is the compositum of $F$ with an imaginary
  quadratic field $E$. By the work of Herglotz \cite{MR1544516}, if
  $K\neq \qq(\sqrt{2}, \sqrt{-1})$, then
  \begin{equation}
    \label{eq:64}
    h(K)=Q_{K/F}h(F)h(E)h(E')/2,
  \end{equation}
  where $E'$ is the only other imaginary quadratic subfield of $K$
  distinct from $E$. In particular, if $F=\qq(\sqrt{p})$,
  $K_j=\qq(\sqrt{p}, \sqrt{-j})$ and $\Bbbk_j=\qq(\sqrt{-pj})$ with
  $j=1,2,3$ and $p\geq 5$, then
\begin{equation}
  \label{eq:70}
  h(K_j)=
  \begin{cases}
    h(F)h(\Bbbk_j) &\qquad \text{if } j=1,2 \text{
      and } p\equiv 3 \pmod{4};\\
    h(F)h(\Bbbk_j)/2 &\qquad \text{otherwise}.
  \end{cases}
\end{equation}
Here we used the facts that $h(\qq(\sqrt{-j}))=1$ for all $j\in \{1,2,3\}$ and
$Q_{K_j/F}$ is calculated in
Proposition~\ref{prop:criterion-for-jK}.
\end{sect}


\begin{sect}\label{subsec:gp-structure-of-unit-mod-2-for-K}
  Suppose that $p$ is odd, and $K=K_1=\qq(\sqrt{p}, \sqrt{-1})$.  Let
  $L=\qq(\sqrt{p^*})\subset K$, where $p^*:=\left(\frac{-1}{p}\right)
  p$, and $\left(\frac{\cdot}{p}\right)$ is the Legendre symbol.  Then
  $O_L=\zz\oplus \zz\omega_p$, with $\omega_p:=(1+\sqrt{p^*})/2\in
  O_L$. Since $\gcd(\d_L, \d_{\qq(\sqrt{-1})})=1$,
  we have $O_K=O_L[\sqrt{-1}]$ and a $\zz$-basis of $O_K$ is given
  by
  \begin{equation}\label{xeq:40}
    \left\{1,\quad \frac{1+\sqrt{p^*}}{2}, \quad \sqrt{-1}, \quad
      \frac{\sqrt{-1}+\sqrt{-p^*}}{2}\right\}.
  \end{equation}
  We claim that $\abs{(O_K/2O_K)^\times}=
  4\left(2-\left(\frac{2}{p}\right)\right)$.  Indeed, we have
\begin{equation}
  \label{eq:52}
  O_K/2O_K \cong (O_L/2O_L)[t]/(t^2+1) = (O_L/2O_L)[t]/((t+1)^2),
  \end{equation}
  with the isomorphism sending $\sqrt{-1}\mapsto \bar{t}$, which denotes
  the image of $t$ in the quotient.  The isomorphism (\ref{eq:52})
  gives rise to an exact sequence
\begin{equation}
  \label{eq:53}
   0 \to (O_L/2O_L) \to (O_K/2O_K)^\times \to (O_L/2O_L)^\times\to 1. 
\end{equation}
Note that $2$ is unramified in $L$, and
\begin{equation}
  \label{eq:54}
  O_L/2O_L \simeq
  \begin{cases}
    \ff_2\oplus \ff_2  &\qquad  \text{if } \left(\frac{2}{p}\right)=1; \\
\ff_4 &\qquad  \text{if } \left(\frac{2}{p}\right)=-1. 
  \end{cases}
\end{equation}
Hence the exact sequence (\ref{eq:53}) splits. More precisely, 
  \begin{equation}\label{xeq:27}
    (O_K/2O_K)^\times \simeq
    \begin{cases}
      (\zmod{2})^2 &\qquad 
  \text{ if } \left(\frac{2}{p}\right)=1;\\
  (\zmod{3})\oplus
    (\zmod{2})^2     &\qquad    
  \text{ if } \left(\frac{2}{p}\right)=-1.
    \end{cases}
  \end{equation}




\end{sect}

\begin{sect}\label{subsec:orders-in-K1-containing-B4}
  Consider the order $B_{1,4}:=\zz[\sqrt{p},
  \sqrt{-1}]=\zz[\sqrt{p^*}, \sqrt{-1}]$ in $K=\qq(\sqrt{p},
  \sqrt{-1})$ with $p$ odd. Since $\zz[\sqrt{p^*}]/2O_L\cong \ff_2$, 
  we
  have $2O_K\subset B_{1,4}$, and
  \begin{equation}
    \label{eq:61}
  O_K/2O_K\supset B_{1,4}/2O_K\cong
  (\zz[\sqrt{p^*}]/2O_L)[t]/((t+1)^2)\cong \ff_2[t]/((t+1)^2)   
  \end{equation}
under
  the isomorphism (\ref{eq:52}). 
  In particular, $(B_{1,4}/2O_K)^\times
  \cong \zmod{2}$.  

  Note that $O_L/2O_L$ is spanned by the image of $1$ and $\omega_p$
  over $\ff_2$. One easily checks that the only other ring
  intermediate to
  \begin{equation}
    \label{eq:62}
\ff_2[t]/((t+1)^2) \subset (O_L/2O_L)[t]/((t+1)^2)=
  (O_L/2O_L)\oplus (O_L/2O_L)(1+\bar{t}\,) 
  \end{equation}
is $\ff_2\oplus (O_L/2O_L)(1+\bar{t}\,)$.  It follows that
$B_{1,2}:=\zz+\zz\sqrt{p}+\zz\sqrt{-1}+\zz y_p^*$ is the only
nontrivial suborder intermediate to $B_{1,4}\subset O_K$, where
\[y_p^*:=\omega_p(1+\sqrt{-1})= (1+\sqrt{p^*})(1+\sqrt{-1})/2. \]
However, it is more convenient to define
$y_p:=(1+\sqrt{-1})(1+\sqrt{p})/2$, then
$B_{1,2}=\zz+\zz\sqrt{p}+\zz\sqrt{-1}+\zz y_p$ as well. Note that
$y_p^2=(1+p)\sqrt{-1}/2+\sqrt{-p}$, so $B_{1,2}=\zz[\sqrt{-1},
y_p]$. Since $B_{1,2}/2O_K\cong \ff_2\oplus (O_L/2O_L)(1+\bar{t}\,)$,
we have
 \[(B_{1,2}/2O_K)^\times\cong O_L/2O_L\simeq (\zmod{2})^2. \]
\end{sect}




\section{$O_F$-orders in $K$} 
\label{sec:OF-orders}
We keep the notations of Section~\ref{sec:units-totally-imag}. In
particular, $F=\qq(\sqrt{p})$ and its ring of integers is denoted by
$O_F$. We will classify all the quadratic $O_F$-orders $B$ satisfying
the following two conditions: 
\begin{enumerate}[(i)]
\item the fraction field of $B$ is a totally imaginary quadratic
  extension $K$ of $F$;
\item $w(B)=[B^\times:O_F^\times]>1$.
\end{enumerate}
Unless specified otherwise, the notation $B$ will be reserved for such
orders throughout this section. 
The quotient group $B^\times/O_F^\times$ is a subgroup of the finite
cyclic group $O_K^\times/O_F^\times$, hence $w(B)$ divides
$w_K=[O_K^\times:O_F^\times]$. Therefore, $K$ must be one of the
fields given in the table of
Section~\ref{subsec:quotient-units-gp-cyclic}.


\begin{prop}\label{prop:wK-prime-order-maximal}
  Suppose that $w_K$ is a prime. Then $B=O_K$ is the unique
  $O_F$-order in $K$ such that $w(B)>1$.
\end{prop}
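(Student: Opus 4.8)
The plan is to reduce the statement to the single claim $O_F[O_K^\times]=O_K$, and then verify this against the short list of fields $K$ for which $w_K$ can be prime. Recall from the paragraph preceding the proposition that $B^\times/O_F^\times$ is a subgroup of the cyclic group $O_K^\times/O_F^\times$ of order $w_K$, so $w(B)$ divides $w_K$; as $w_K$ is prime and $w(B)>1$, we must have $w(B)=w_K$ and hence $B^\times=O_K^\times$. Since $B$ is an $O_F$-order in $K$, we have $O_F\subseteq B\subseteq O_K$, and therefore $O_F[O_K^\times]=O_F[B^\times]\subseteq B\subseteq O_K$. Thus, once the claim $O_F[O_K^\times]=O_K$ is established, it follows that $B=O_K$; and since $w(O_K)=w_K>1$, the order $O_K$ is indeed the unique $O_F$-order in $K$ with $w(B)>1$.

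To prove the claim, write $\bmu_K=\dangle{\zeta_{2n}}$ and distinguish the two possibilities for $Q_{K/F}$. If $Q_{K/F}=1$, then $O_K^\times=\bmu_K O_F^\times$, so $O_F[O_K^\times]=O_F[\zeta_{2n}]$, and by \eqref{xeq:14} the prime $w_K$ equals $n$. By Section~\ref{subsec:list-of-roots-of-unity} we have $n\le6$, and $n\neq1$ since $w_K>1$, so $n\in\{2,3,5\}$; correspondingly $K=F(\zeta_{2n})$ is $\qq(\sqrt{p},\sqrt{-1})$, $\qq(\sqrt{p},\sqrt{-3})$, or $\qq(\zeta_{10})$ (with $F=\qq(\sqrt{5})$ in the last case). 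If $Q_{K/F}=2$, then by \eqref{xeq:14} we have $w_K=\abs{\bmu_K}=2n$, which is even, forcing $w_K=2$, $n=1$, $\bmu_K=\{\pm1\}$; Proposition~\ref{prop:criterion-for-jK} then gives $p\equiv3\pmod{4}$ and $K=F(\sqrt{-\epsilon})=\qq(\sqrt{p},\sqrt{-2})$, where $\epsilon$ is the fundamental unit of $F$.

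It remains to check $O_F[O_K^\times]=O_K$ in these cases; since $O_F[O_K^\times]\subseteq O_K$ always, only the reverse inclusion is at issue. When $Q_{K/F}=1$ we must show $O_F[\zeta_{2n}]\supseteq O_K$. For $K=\qq(\sqrt{p},\sqrt{-1})$ we have $p\equiv1\pmod{4}$, hence $p^*=p$ and $L=F$ in the notation of Section~\ref{subsec:gp-structure-of-unit-mod-2-for-K}, so $O_K=O_F[\sqrt{-1}]=O_F[\zeta_4]$. For $K=\qq(\sqrt{p},\sqrt{-3})$ we have $p\neq3$ (otherwise $\sqrt{-1}\in K$ and $n=6$), hence $\gcd(\d_F,\d_{\qq(\sqrt{-3})})=1$ and $O_K=O_F[\zeta_6]$. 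For $K=\qq(\zeta_{10})$ we have $O_K=\zz[\zeta_{10}]\subseteq O_F[\zeta_{10}]$. Finally, for $Q_{K/F}=2$, Lemma~\ref{lem:norm-fund-unit-negative} applied with $\zeta_{2n}=-1$ yields an element $z\in O_K^\times$ with $z^2=-\epsilon$, that is, $z=\pm\sqrt{-\epsilon}$; hence $O_F[O_K^\times]\supseteq O_F[\sqrt{-\epsilon}]=\zz[\sqrt{p},\sqrt{-\epsilon}]$, which equals $O_K$ by Proposition~\ref{prop:ring-of-int-in-quad-ext}. I expect the case $Q_{K/F}=2$ to be the genuine obstacle: there $\bmu_K=\{\pm1\}$ is too small to generate $O_K$ over $O_F$, so one must instead invoke the ``half-integral'' unit $\sqrt{-\epsilon}$ provided by Lemma~\ref{lem:norm-fund-unit-negative} together with the explicit description of $O_K$ in Proposition~\ref{prop:ring-of-int-in-quad-ext}.
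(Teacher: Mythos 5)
Your proposal is correct and follows essentially the same route as the paper: reduce via $B^\times=O_K^\times$ to showing that $O_F$ together with the units generates $O_K$, then verify this field by field using the maximality of $\zz[\zeta_{10}]$, the coprime-discriminant criterion for $O_F[\zeta_{2n}]=O_K$, and Proposition~\ref{prop:ring-of-int-in-quad-ext} for $K=F(\sqrt{-\epsilon})$. The only cosmetic difference is that you organize the cases by $Q_{K/F}$ and $n$ rather than by $w_K$, and you route the $Q_{K/F}=2$ case through Lemma~\ref{lem:norm-fund-unit-negative} to exhibit the unit $\sqrt{-\epsilon}$ explicitly.
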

\begin{proof}
  By the table of Section~\ref{subsec:quotient-units-gp-cyclic},
  $w_K$ is a prime only when $w_K=2, 3, 5$.  Then $O_K^\times/O_F^\times$ is a
  cyclic group of prime order with a nontrivial subgroup
  $B^\times/O_F^\times$. Therefore, $B^\times/O_F^\times=O_K^\times/O_F^\times$, so $B^\times
  =O_K^\times$.  Then $B\supseteq O_F[u]$ for any $u\in O_K^\times$.

 If $w_K=5$, then $F=\qq(\sqrt{5})$ and $K=\qq(\zeta_{10})$. We have
 $B\supseteq O_F[\zeta_{10}]\supseteq \zz[\zeta_{10}]$. But
 $\zz[\zeta_{10}]$ is the maximal order in $K$. So
 $B=O_K=\zz[\zeta_{10}]$. 

 If $Q_{K/F}=2$ and $w_K=2$, then $p\equiv 3 \pmod{4}$ and
 $K=F(\sqrt{-\epsilon})=\qq(\sqrt{p},
 \sqrt{-2})$. Proposition~\ref{prop:ring-of-int-in-quad-ext} shows
 that $O_F[\sqrt{-\epsilon}]=O_K$ is the maximal order in $K$. So
 $B=O_K=O_F[\sqrt{-\epsilon}]$.

 Suppose that $Q_{K/F}=1$, $p$ is odd and $K\neq \qq(\zeta_{10})$. In
 other words, we assume one of the following holds: 
 \begin{itemize}
 \item $p\equiv 1 \pmod{4}$, and $K\neq \qq(\zeta_{10})$;
 \item $p\equiv 3 \pmod{4}$, $p\neq 3$, and $K=F(\zeta_6)=\qq(\sqrt{p},
   \sqrt{-3})$.
 \end{itemize}
 Then we have $K=\qq(\sqrt{p}, \sqrt{-j})$ with $ j\in\{1, 3\}$, which
 depends
 on $p$.  By Section~\ref{subsec:disc-criterion-for-jK}, the
 assumption $Q_{K/F}=1$ guarantees that the discriminants of
 $\qq(\sqrt{p})$ and $\qq(\sqrt{-j})$ are relatively prime.  Let
 $\zeta=\zeta_4$ if $j=1$ and $\zeta=\zeta_6$ if $j=3$. Then
 $B\supseteq O_F[\zeta]$. By \cite[Proposition
 III.17]{Lang-ANT}, $O_F[\zeta]$ is the maximal order in $K$. Therefore
 $B=O_K$.

 The only remaining case to consider is $F=\qq(\sqrt{2})$ and
 $K=F(\zeta_6)=\qq(\sqrt{2},\sqrt{-3})$. We note that the
 discriminants of $\qq(\sqrt{2})$ and $\qq(\sqrt{-3})$ are again
 relatively prime. So the same argument as above shows that
 $B=O_K$. 
\end{proof}
\begin{lem}\label{lem:wB-is-2-or-4}
  Suppose that $p\equiv 3 \pmod{4}$ and $K=\qq(\sqrt{p},
  \sqrt{-1})$. Let $B\subseteq O_K$ be a quadratic $O_F$-order
  with $2\mid w(B)$. Then $B_{1,4}=\zz[\sqrt{p},\sqrt{-1}]\subseteq
  B$. Moreover, $4\mid w(B)$ if and only if
  $y_p=(1+\sqrt{-1})(1+\sqrt{p})/2\in B$.
\end{lem}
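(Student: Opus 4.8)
The plan is to exploit the filtration of orders between $B_{1,4}$ and $O_K$ that was already worked out in Section~\ref{subsec:orders-in-K1-containing-B4}, together with the fact that $w(B)$ is detected by the unit group $B^\times$ inside $O_K^\times$. Recall from Proposition~\ref{prop:criterion-for-jK} that $Q_{K/F}=2$ here, so by \eqref{xeq:14} we have $w_K=|\bmu_K|=4$; thus $O_K^\times/O_F^\times$ is cyclic of order $4$, and $2\mid w(B)$ means exactly that $B^\times/O_F^\times$ is the subgroup of order $2$ or $4$.

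For the first assertion I would argue as follows. Since $O_K^\times/O_F^\times$ is cyclic of order $4$ generated by the image of an element $z$ with $z^2 = \Nm_{K/F}(z)\zeta_4$ (by \eqref{xeq:53} with $2n=4$), its unique subgroup of order $2$ is generated by the image of $z^2$, hence by the image of $\zeta_4 = \sqrt{-1}$. Therefore $2\mid w(B)$ forces $\sqrt{-1}\in B^\times\subseteq B$, and since $\sqrt{p}\in O_F\subseteq B$ as well, we get $B_{1,4}=\zz[\sqrt{p},\sqrt{-1}]\subseteq B$. (One should note $p\equiv 3\pmod 4$ guarantees $\bmu_K=\{\pm 1, \pm\sqrt{-1}\}$, i.e. $n=2$ in the notation of Section~\ref{subsec:list-of-roots-of-unity}, once $p>3$; the case $p=3$ has $K=\qq(\zeta_{12})$ but the statement of this lemma restricts to $K=\qq(\sqrt{p},\sqrt{-1})$, which for $p=3$ is indeed $\qq(\zeta_{12})$ and must be handled — or excluded by the standing running hypothesis $p>3$ inherited from Section~\ref{sec:OF-orders}; I would check which applies and remark accordingly.)

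For the second assertion, the point is that $4\mid w(B)$ means $B^\times = O_K^\times$, equivalently the image of $z$ lies in $B^\times$. Since by \eqref{xeq:53} $z^2 \equiv \zeta_4 \pmod{O_F^\times}$, membership of $\zeta_4$ is automatic (it follows from $2\mid w(B)$), so the extra content of $4\mid w(B)$ is precisely that $B$ contains a unit generating $O_K^\times/\langle O_F^\times, \sqrt{-1}\rangle$. Now I would invoke the classification of intermediate orders: by Section~\ref{subsec:orders-in-K1-containing-B4}, the only orders strictly between $B_{1,4}$ and $O_K$ are $B_{1,2}=\zz[\sqrt{-1}, y_p]$ and $O_K$ itself; and the computation there gives $(B_{1,4}/2O_K)^\times\cong\zmod 2$ while $(B_{1,2}/2O_K)^\times\cong(\zmod 2)^2$. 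So $B\in\{B_{1,4}, B_{1,2}, O_K\}$, and $B\supsetneq B_{1,4}$ iff $y_p\in B$ iff $B\in\{B_{1,2},O_K\}$. It then remains to show $w(B_{1,2})\geq 4$, which together with $w(B_{1,4})=2$ (from $(B_{1,4}/2O_K)^\times\cong\zmod 2$, forcing $B_{1,4}^\times$ to be small — I'd pin this down by checking $w(B_{1,4})=2$ directly, e.g. $B_{1,4}^\times\ni\sqrt{-1}$ but no element mapping to a generator of the order-$4$ group) gives the equivalence.

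The main obstacle, as I see it, is establishing the numerical values $w(B_{1,4})=2$ and $w(B_{1,2})=4$ cleanly rather than circularly — in particular showing $B_{1,2}$ actually contains a unit of "order $4$ mod $O_F^\times$" and $B_{1,4}$ does not. The natural tool is reduction mod $2O_K$: the chain of inclusions $\sqrt{-1}$-adically controls units, and the exact sequences \eqref{eq:53}, \eqref{eq:61} relate $(B/2O_K)^\times$ to $B^\times$ via the reduction map $B^\times\to(B/2O_K)^\times$, whose kernel is $1+2O_K$ (torsion-free, as $O_K$ has characteristic $0$ and $1+2O_K\cap\bmu_K = \{1\}$ since $-1\notin 1+2O_K$). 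Thus $B^\times/O_F^\times$ injects into $(B/2O_K)^\times/(\text{image of }O_F^\times)$ modulo this torsion-free kernel, and comparing the sizes $|(B_{1,4}/2O_K)^\times|=2$ versus $|(B_{1,2}/2O_K)^\times|=4$ pins down $w(B)$. I would want to double-check the image of $O_F^\times$ in each quotient (it should be trivial, since $\epsilon \equiv 1 \pmod{2O_K}$ is plausible but needs verifying via Proposition~\ref{prop:sqrt-epsilon-and-sqrt-2}) so that the count comes out exactly $2$ and $4$ respectively; this bookkeeping is the one place where a short explicit computation is unavoidable.
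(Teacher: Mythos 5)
Your treatment of the first assertion is essentially the paper's: since $Q_{K/F}=2$ here, $O_K^\times/O_F^\times$ is cyclic generated by the class of $z$ with $z^2=\Nm_{K/F}(z)\zeta_{2n}$, so its unique subgroup of order $2$ is generated by the class of $\sqrt{-1}$, and $2\mid w(B)$ forces $\sqrt{-1}\in B$. Note that the lemma is \emph{not} restricted to $p>3$: for $p=3$ the group has order $12$ and the same conclusion holds because $z^6\equiv\sqrt{-1}\pmod{O_F^\times}$, so you should say this rather than leave the case open (your parenthetical claim that $w_K=4$ is false for $p=3$, though the part of the argument you actually use survives).

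The second assertion is where there is a genuine gap. Reducing ``$4\mid w(B)$ iff $y_p\in B$'' to the two numerical facts $w(B_{1,4})=2$ and $w(B_{1,2})\geq 4$ is logically fine, but the tool you propose for them --- comparing $\lvert(B_{1,4}/2O_K)^\times\rvert=2$ with $\lvert(B_{1,2}/2O_K)^\times\rvert=4$ --- cannot deliver the lower bound $w(B_{1,2})\geq 4$. The injection $B^\times/\bigl(B^\times\cap(1+2O_K)\bigr)\hookrightarrow (B/2O_K)^\times$ only bounds unit groups from above; nothing a priori prevents $B_{1,2}^\times=B_{1,4}^\times$ even though the finite ring $B_{1,2}/2O_K$ has more units. (Moreover your claim that $-1\notin 1+2O_K$ is false, since $-1=1-2$; the kernel of reduction mod $2$ already contains part of $\bmu_K$, which makes the proposed count even less decisive.) What is actually needed is to exhibit the order-$4$ generator $z=\sqrt{\epsilon}\,\zeta_8$ of $O_K^\times/O_F^\times$ inside any $B$ containing $y_p$, and to show it is absent otherwise. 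The paper does this with the identity coming from Proposition~\ref{prop:sqrt-epsilon-and-sqrt-2}: writing $\sqrt{\epsilon/2}=m+n\sqrt{p}+(1+\sqrt{p})/2$ with $m,n\in\zz$, one gets $\sqrt{\epsilon}\,\zeta_8=\sqrt{\epsilon/2}\,(1+\sqrt{-1})=(m+n\sqrt{p})(1+\sqrt{-1})+y_p$, whence for any $B\supseteq B_{1,4}$ one has $z\in B$ if and only if $y_p\in B$ --- settling both directions at once, with no need for the classification of intermediate orders. Without this (or an equivalent explicit computation producing the unit), your argument does not close.
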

\begin{proof}
  If $p=3$, then $O_K^\times/O_F^\times$ is a cyclic 
  group of order $12$, generated by
  the image of $z=\sqrt{\epsilon \zeta_{12}}\in O_K^\times$.  
  Since $2\mid 
  w(B)$, we have $B\ni z^6=\epsilon^3\sqrt{-1}$. 
  Then $\sqrt{-1}\in B^\times$
  as $\epsilon \in O_F^\times\subset B^\times$. We have $4\mid w(B)$ if and
  only if $B\ni z^3 =\epsilon\sqrt{\epsilon}\zeta_{8}$, or
  equivalently, $B\ni \sqrt{\epsilon}\zeta_{8}$. 

  If $p>3$ and $p\equiv 3 \pmod{4}$, then $O_K^\times/O_F^\times$ 
  is a cyclic group of
  order $4$ generated by $z=\sqrt{\epsilon \zeta_4}$. If $2\mid w(B)$,
  then $B\ni z^2= \epsilon \sqrt{-1}$, so $\sqrt{-1}\in B$.  Moreover,
  $w(B)=4$ if and only if $B\ni z=\sqrt{\epsilon}\zeta_8$. 

  It remains to show that $\sqrt{\epsilon}\zeta_8\in B$ if and only if
  $y_p\in B$.  By Proposition~\ref{prop:sqrt-epsilon-and-sqrt-2},
  there exists $m,n\in \zz$ such that $\sqrt{\epsilon/2}=
  m+n\sqrt{p}+(1+\sqrt{p})/2$. We then have
\[\sqrt{\epsilon}\zeta_8=\sqrt{\epsilon/2}\cdot
(\sqrt{2}\zeta_8)=\left(m+n\sqrt{p}+\frac{1+\sqrt{p}}{2}\right)(1+\sqrt{-1}).\]
But $B$ already contains $\zz[\sqrt{p}, \sqrt{-1}]$ by the above
arguments, so $\sqrt{\epsilon}\zeta_8\in B$ if and only if
$y_p=(1+\sqrt{-1})(1+\sqrt{p})/2\in B$.
\end{proof}

\begin{prop}\label{prop:list-of-order-p-cong-3-mod-4}
  Suppose that $p\equiv 3\pmod{4}$ and $K=\qq(\sqrt{p}, \sqrt{-1})$. The
  $O_F$-orders $B\subseteq O_K$ with $2\mid w(B)$ are:
  \begin{alignat*}{2}
    &O_K,   &\qquad  w(O_K)&=4\gcd(p,3); \\
    B_{1,2}&=\zz[\sqrt{-1}, y_p],
    &\qquad  w(B_{1,2})&=4;\\
    B_{1,4}&=\zz[\sqrt{p}, \sqrt{-1}], &\qquad w(B_{1,4})&=2.
  \end{alignat*}
  If $p> 3$, the above is a complete list of $O_F$-orders in $K$
  with $w(B)>1$. If $p=3$, there is an extra order $B_{1,3}=\zz[\sqrt{3},
  \zeta_6]$ with $w(B_{1,3})=3$.
\end{prop}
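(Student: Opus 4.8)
The plan is to combine Lemma~\ref{lem:wB-is-2-or-4} with the explicit description of the lattice of subrings intermediate to $B_{1,4}\subset O_K$ obtained in Section~\ref{subsec:orders-in-K1-containing-B4}, together with the case $p=3$ handled via Proposition~\ref{prop:wK-prime-order-maximal} and the cyclic structure of $O_K^\times/O_F^\times$. First I would observe that by the remarks opening Section~\ref{sec:OF-orders}, any $O_F$-order $B\subset O_K$ with $w(B)>1$ has $w(B)\mid w_K$, and for $K=\qq(\sqrt{p},\sqrt{-1})$ with $p\equiv 3\pmod 4$ the table of Section~\ref{subsec:quotient-units-gp-cyclic} gives $w_K=4$ (if $p>3$) or $w_K=12$ (if $p=3$). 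In either case $w(B)$ is even, so Lemma~\ref{lem:wB-is-2-or-4} applies and forces $B_{1,4}=\zz[\sqrt{p},\sqrt{-1}]\subseteq B\subseteq O_K$.

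Next I would invoke the computation of Section~\ref{subsec:orders-in-K1-containing-B4}: the only subrings strictly between $B_{1,4}$ and $O_K$ correspond under the isomorphism (\ref{eq:52}) to the rings strictly between $\ff_2[t]/((t+1)^2)$ and $(O_L/2O_L)[t]/((t+1)^2)$, and there is exactly one such, namely $\ff_2\oplus(O_L/2O_L)(1+\bar t)$, which is realized by $B_{1,2}=\zz[\sqrt{-1},y_p]$. Hence the only $O_F$-orders $B$ with $B_{1,4}\subseteq B\subseteq O_K$ are $B_{1,4}$, $B_{1,2}$, and $O_K$. It remains to pin down the indices $w(B)$. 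For $B_{1,4}$ we have $\sqrt{-1}\in B_{1,4}^\times$ but, by the last part of Lemma~\ref{lem:wB-is-2-or-4}, $w(B_{1,4})=4$ would force $y_p\in B_{1,4}$, which is false since $B_{1,2}\supsetneq B_{1,4}$; therefore $w(B_{1,4})=2$. For $B_{1,2}$, since $y_p\in B_{1,2}$ the same lemma gives $4\mid w(B_{1,2})$, and as $w(B_{1,2})\mid w_K$ with $w(B_{1,2})\neq w(O_K)$ when $p=3$ (because $O_K\supsetneq B_{1,2}$ and the unit groups must then differ, or directly because $12\nmid 4$), we get $w(B_{1,2})=4$. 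Finally $w(O_K)=w_K=4\gcd(p,3)$ by the table, which reads $4$ for $p>3$ and $12$ for $p=3$.

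For the case $p=3$ there is the extra order $B_{1,3}=\zz[\sqrt{3},\zeta_6]$; I would note that $\zeta_6\in B_{1,3}^\times$ generates a subgroup of order $3$ in $O_K^\times/O_F^\times\cong\zz/12\zz$, so $3\mid w(B_{1,3})$, while $B_{1,3}$ does not contain $\sqrt{-1}$ (indeed $\sqrt{-3}\in B_{1,3}$ but $\zz[\sqrt{3},\sqrt{-3}]$ is not $2$-maximal, and one checks $\sqrt{-1}\notin\zz[\sqrt 3,\zeta_6]$ by a discriminant or explicit $\zz$-basis argument), so $2\nmid w(B_{1,3})$ and hence $w(B_{1,3})=3$; conversely any $B$ with $3\mid w(B)$ but $2\nmid w(B)$ must contain $z^4=\epsilon^2\zeta_6$, hence $\zeta_6$, hence $B\supseteq O_F[\zeta_6]=\zz[\sqrt 3,\zeta_6]$, and a dimension/discriminant check shows this forces $B=B_{1,3}$ (it is not all of $O_K$ since $w(B_{1,3})<12$). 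The orders with $w(B)$ divisible by $6$ would need to contain both $\sqrt{-1}$ and $\zeta_6$, hence all of $O_K$, so no new orders arise, completing the list.

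I expect the main obstacle to be the bookkeeping around $p=3$: one must be careful that Lemma~\ref{lem:wB-is-2-or-4} only addresses orders with $2\mid w(B)$, so the order $B_{1,3}$ with $w(B_{1,3})=3$ genuinely lies outside its scope and needs the separate $\zeta_6$-argument above, and one must confirm there is no order of index $6$ by checking that $\sqrt{-1}$ and $\zeta_6$ together generate $O_K$ over $O_F$. Everything else is a routine translation of the subring computation in Section~\ref{subsec:orders-in-K1-containing-B4} into statements about $O_F$-orders.
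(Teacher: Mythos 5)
Your argument follows the paper's own route: for $2\mid w(B)$, reduce to Lemma~\ref{lem:wB-is-2-or-4} plus the classification of rings intermediate to $B_{1,4}\subset O_K$ from Section~\ref{subsec:orders-in-K1-containing-B4}, then treat the residual possibility $w(B)=3$ at $p=3$ via $\zeta_6$ and the index computation $[O_K:B_{1,3}]=3$. Two local points need repair, though neither is fatal to your overall plan. First, the blanket claim that ``in either case $w(B)$ is even'' is false when $p=3$ (where $w(B)=3$ occurs); you clearly intend it only for the orders with $2\mid w(B)$, and you do handle $B_{1,3}$ separately, but the sentence should be restricted accordingly. Second, your justification that $w(B_{1,2})\neq 12$ when $p=3$ does not stand as written: ``$O_K\supsetneq B_{1,2}$, so the unit groups must differ'' is not a valid inference (a proper suborder can carry the full unit group --- compare $B_{3,4}^\times=O_{K_3}^\times$ in Section~\ref{sec:quadr-prop-zzsqrtp-orders} when $\epsilon\in A$), and ``$12\nmid 4$'' presupposes the conclusion. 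The correct repair is exactly the observation in your closing paragraph: if $12\mid w(B_{1,2})$ then $6\mid w(B_{1,2})$, so $B_{1,2}$ contains both $\sqrt{-1}$ and $\zeta_6$, and since $O_F[\sqrt{-1},\zeta_6]=O_K$ (e.g.\ $\zeta_{12}=\sqrt{3}+\sqrt{-1}\,\zeta_6$), this forces $B_{1,2}=O_K$, a contradiction; the paper achieves the same end by checking directly that $\zeta_{12}=(\sqrt{3}+\sqrt{-1})/2\notin B_{1,2}$. With these repairs your proof is complete and coincides with the paper's.
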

\begin{proof}
  Recall that $w_K=4$ or $12$. Given any $B\subseteq O_K$ with
  $w(B)>1$, we have either $2\mid w(B)$ or $w(B)=3$, with the latter
  case possible only if $p=3$.

  Suppose that $2\mid w(B)$. Then $B\supseteq B_{1,4}:=\zz[\sqrt{p},
  \sqrt{-1}]$ by Lemma~\ref{lem:wB-is-2-or-4}. By
  Section~\ref{subsec:orders-in-K1-containing-B4}, $B_{1,2}$ is the
  only $O_F$-order of index $2$ intermediate to $B_{1,4}\subset O_K$.
  Since $y_p\not\in B_{1,4}$, we have $w(B_{1,4})=2$ by
  Lemma~\ref{lem:wB-is-2-or-4}.  On the other hand, $4\mid
  w(B_{1,2})$. So $w(B_{1,2})=4$ if $p>3$. Note that
  $\zeta_{12}=(\sqrt{3}+\sqrt{-1})/2\not\in B_{1,2}$ if $p=3$. Hence
  $w(B_{1,2})=4$ in this case as well.

  Suppose that $p=3$, $z=\sqrt{\epsilon\zeta_{12}}$ and $3\mid
  w(B)$. Then $B\ni z^4=\epsilon^2\zeta_6$ and hence $B\supseteq
  \zz[\sqrt{3}, \zeta_6]$. 
   A $\zz$-basis of $B_{1,3}:=\zz[\sqrt{3}, \zeta_6]$
  is given by 
\[\left\{ 1,\quad  \sqrt{3}, \quad \zeta_6=\frac{1+\sqrt{-3}}{2},
  \quad \sqrt{3}\zeta_6=\frac{\sqrt{3}+3\sqrt{-1}}{2} \right\}.\] 
One
easily checks that $[O_K:B_{1,3}]=3$. Hence the only other $O_F$-order
containing $B_{1,3}$ is $O_K$ itself. Since $\sqrt{-1}\not\in B_{1,3}$, we
have $w(B_{1,3})=3$. 
\end{proof}

For the rest of this section, we study the class numbers $h(B)$ of
those non-maximal orders $B$ with $w(B)>1$.

\begin{sect}
  For the moment let us assume that $K$ is an arbitrary number field,
  and $B\subseteq O_K$ is an order in $K$ with conductor $\f$.  The
  class number of $B$ is given by \cite[Theorem I.12.12]{MR1697859}
  \begin{equation}
    \label{xeq:36}
    h(B)=\frac{h(O_K) [{(O_K/\f)}^\times:
      {(B/\f)}^\times]}{[O_K^\times :B^\times]}\,. 
  \end{equation}
  We leave it as an exercise to show that $[(O_K/\a)^\times:
  (B/\a)^\times]= [(O_K/\f)^\times: (B/\f)^\times]$ for any nonzero
  ideal $\a$ of $O_K$ contained in $\f$. Therefore,
\begin{equation}
  \label{xeq:37}
    h(B)=\frac{h(O_K) [{(O_K/\a)}^\times:
      {(B/\a)}^\times]}{[O_K^\times :B^\times]}\,. 
  \end{equation}
\end{sect}

\begin{lem}\label{lem:class-number-b12-b14}
  Suppose that $p\equiv 3\pmod{4}$ and $K=\qq(\sqrt{p},
  \sqrt{-1})$. Let $B_{1,2}$ and $B_{1,4}$ be the orders in
  Proposition~\ref{prop:list-of-order-p-cong-3-mod-4}. We have 
\begin{equation}
  \label{xeq:31}
h(B_{1,2})=h(B_{1,4})=\left(2-\left(\frac{2}{p}\right)\right)h(O_K)  
\end{equation}
if $p>3$ and $p\equiv 3\pmod{4}$. If $p=3$,
then $h(B_{1,2})=h(B_{1,4})=h(O_K)$. 
\end{lem}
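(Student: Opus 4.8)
The plan is to apply the class number formula (\ref{xeq:37}) to both orders at once, using the common auxiliary ideal $\a=2O_K$. First I would check that this choice is legitimate: as established in Section~\ref{subsec:orders-in-K1-containing-B4} we have $2O_K\subseteq B_{1,4}\subseteq B_{1,2}$, and $2O_K$ is an ideal of $O_K$, so $2O_K$ lies inside the conductor of each of $B_{1,2}$ and $B_{1,4}$. Then (\ref{xeq:37}) reads
\[
h(B)=\frac{h(O_K)\,[(O_K/2O_K)^\times:(B/2O_K)^\times]}{[O_K^\times:B^\times]},\qquad B\in\{B_{1,2},\,B_{1,4}\},
\]
and the whole proof reduces to evaluating the two indices appearing here, all the relevant data having been assembled in Section~\ref{sec:units-totally-imag}.

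For the denominator, since $O_F^\times\subseteq B^\times\subseteq O_K^\times$ one has $[O_K^\times:B^\times]=w_K/w(B)$. Plugging in the values of $w_K$ from the table in Section~\ref{subsec:quotient-units-gp-cyclic} and of $w(B)$ from Proposition~\ref{prop:list-of-order-p-cong-3-mod-4}: for $p>3$, $w_K=4$, giving $[O_K^\times:B_{1,4}^\times]=2$ and $[O_K^\times:B_{1,2}^\times]=1$; for $p=3$, $w_K=12$, giving $[O_K^\times:B_{1,4}^\times]=6$ and $[O_K^\times:B_{1,2}^\times]=3$. For the numerator, I would combine $\abs{(O_K/2O_K)^\times}=4\left(2-\left(\frac{2}{p}\right)\right)$ from Section~\ref{subsec:gp-structure-of-unit-mod-2-for-K} with $(B_{1,4}/2O_K)^\times\cong\zmod{2}$ and $(B_{1,2}/2O_K)^\times\cong O_L/2O_L\cong(\zmod{2})^2$ from Section~\ref{subsec:orders-in-K1-containing-B4}, so that
\[
[(O_K/2O_K)^\times:(B_{1,4}/2O_K)^\times]=2\left(2-\left(\tfrac{2}{p}\right)\right),\qquad
[(O_K/2O_K)^\times:(B_{1,2}/2O_K)^\times]=2-\left(\tfrac{2}{p}\right).
\]

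Substituting into the displayed formula: for $p>3$ both $h(B_{1,4})$ and $h(B_{1,2})$ come out equal to $\left(2-\left(\frac{2}{p}\right)\right)h(O_K)$; and for $p=3$, where $\left(\frac{2}{3}\right)=-1$ so $2-\left(\frac{2}{3}\right)=3$, one gets $h(B_{1,4})=h(O_K)\cdot 6/6=h(O_K)$ and $h(B_{1,2})=h(O_K)\cdot 3/3=h(O_K)$. I do not expect any genuine obstacle here: every ingredient is already in place, and the argument is essentially bookkeeping. The one point demanding a little care is keeping the cases $p>3$ and $p=3$ apart --- the denominator jumps because $w_K$ goes from $4$ to $12$ --- together with the (trivial) check that $2O_K$ is contained in both conductors.
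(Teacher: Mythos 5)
Your proposal is correct and follows essentially the same route as the paper: apply (\ref{xeq:37}) with $\a=2O_K$, read off the orders of the unit groups from Sections~\ref{subsec:gp-structure-of-unit-mod-2-for-K} and \ref{subsec:orders-in-K1-containing-B4}, and compute the denominator as $w_K/w(B)$ using Proposition~\ref{prop:list-of-order-p-cong-3-mod-4}. The arithmetic in both cases $p>3$ and $p=3$ checks out.
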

\begin{proof}
  By Section~\ref{subsec:orders-in-K1-containing-B4}, we have $
  O_K\supset B_{1,2}\supset B_{1,4} \supset 2O_K$. So take $\a=2O_K$
  in (\ref{xeq:37}). It has been shown in
  Sections~\ref{subsec:gp-structure-of-unit-mod-2-for-K} and
  \ref{subsec:orders-in-K1-containing-B4} that
\[\abs{(O_K/2O_K)^\times}=4\left(2-\left(\frac{2}{p}\right)\right),
\quad \abs{(B_{1,2}/2O_K)^\times}=4 \quad \text{and} \quad
\abs{(B_{1,4}/2O_K)^\times}=2.\] 
On the other hand, $[O_K^\times:
B^\times]=w_K/w(B)$ for $B=B_{1,2}$ or $B_{1,4}$. Recall that $w_K=4$
if $p>3$ and $w_K=12$ if $p=3$. The lemma now follows from
Proposition~\ref{prop:list-of-order-p-cong-3-mod-4}, where it has been
shown that $w(B_{1,2})=4$ and $w(B_{1,4})=2$. 
\end{proof}



\begin{sect}\label{subsec:p-2-B2}
  Assume that $F=\qq(\sqrt{2})$ and
  $K=F(\zeta_8)=\qq(\sqrt{2},\sqrt{-1})$. Then $w_K=4$, and
  $O_K^\times/O_F^\times\cong \zmod{4}$.  Any $B\subseteq O_K$ with
  $w(B)>1$ must contain $O_F[\zeta_8^2]=\zz[\sqrt{2},\sqrt{-1}]$. By
  Exercise 42(b) of \cite[Chapter 2]{MR0457396}, a $\zz$-basis of
  $O_K$ is given by $\left\{1,\quad \sqrt{-1}, \quad \sqrt{2}, \quad
    (\sqrt{2}+\sqrt{-2})/2\right\}$. Let $B=\zz[\sqrt{2}, \sqrt{-1}]$,
  which is a sublattice of $O_K$ of index $2$. Therefore, there are no
  other quadratic $O_F$-orders $B'$ in $K$ with $w(B')>1$ and $B'\neq
  O_K$. We have
  \begin{equation}
    \label{xeq:17}
    w(O_K)=4 \quad \text{and} \quad w(B)=2.
  \end{equation}
  Note that $\sqrt{2}O_K\subseteq B$.  The ideal $\p=(1+\zeta_8)O_K$
  is the unique prime ideal above $2$.  Therefore, $O_K/\sqrt{2}O_K$
  is a two-dimensional $\ff_2$-algebra whose unit group
  $(O_K/\sqrt{2}O_K)^\times=(O_K/\p^2)^\times \cong \zmod{2}$.  
  Since $[O_K:B]=2$,
  we have $B/\sqrt{2}O_K\cong \ff_2 $. It follows that
   $ h(B)=h(O_K)=1.$
\end{sect}

\begin{sect}\label{sec:3.7}
  Let $K=\qq(\sqrt{3}, \sqrt{-1})$ and $B_{1,3}=\zz[\sqrt{3},
  \zeta_6]$.  We have $\sqrt{-3}O_K\subset B_{1,3}$. 
  On the other hand, $\sqrt{-3}O_K$ is a prime ideal in $O_K$ 
  with residue field $\ff_9$. 
  Since $[O_K: B_{1,3}]=3$, we have $B_{1,3}/\sqrt{3}O_K\cong
  \ff_3$. Therefore, $h(B_{1,3})=h(O_K)=1$.
\end{sect}

\begin{sect}\label{sec:hO}
  Let $D$ be a totally definite quaternion algebra over
  $F=\qq(\sqrt{p})$ of discriminant ideal $\calD\subset O_F$, and $\calO$ 
  an Eichler order of level $\calN$, where $\calN\subset O_F$ is a
  square-free prime-to-$\calD$ ideal. The mass formula 
 \cite[Chapter V, Corollary 2.3]{vigneras} states that
\begin{equation}
  \label{eq:mass_formula}
  \Mass(\calO)=\frac{1}{2} \zeta_F(-1) h(F) \prod_{\grp
  | \calD } 
  (N(\grp)-1) \prod_{\grp|\calN} (N(\grp)+1)=:M,    
\end{equation}
where $\zeta_F(s)$ is the Dedekind zeta function of $F$. For any
$O_F$-order $B$ in a quadratic extension $K/F$, we define the Artin
symbol 
\[ \left(\frac{K}{\grp}\right ):=
\begin{cases}
  \ 1 & \text{if $\grp$ splits in $K$;} \\
  -1 & \text{if $\grp$ is inert in $K$;} \\
  \ 0 & \text{if $\grp$ is ramified in $K$;} \\
\end{cases} \]
and the Eichler symbol
\[ \left(\frac{B}{\grp}\right ):=
\begin{cases}
  \! \left(\frac{K}{\grp}\right ) & \text{if $\grp\nmid \grf(B)$;} \\
  \ 1 & \text{otherwise;}  
\end{cases} \]
where $\grf(B)\subseteq O_F$ is the conductor of $B$. Define
\begin{equation}\label{eq:mBKN}
  \begin{split}
 & E_{K,\calD, \calN}:=\prod_{\grp|\calD} \left (
  1-\left(\frac{K}{\grp}\right )\right ) \prod_{\grp|\calN} \left (
  1+\left(\frac{K}{\grp}\right )\right ), \quad \text{and} \\
 & E_{B,\calD, \calN}:=\prod_{\grp|\calD} \left (
  1-\left(\frac{B}{\grp}\right )\right ) \prod_{\grp|\calN} \left (
  1+\left(\frac{B}{\grp}\right )\right ).     
  \end{split}
\end{equation}
By the formula \cite[p.~94]{vigneras}, one has
\[ \prod_{\grp} m_\grp(B)=E_{B,\calD,\,\calN}. \] 
For an ideal $\gra\subset O_F$ and a square-free integer $n$, we can
write $\gra=\gra_{(n)} \gra^{(n)}$ as the product of a 
$n$-primary ideal $\gra_{(n)}$ and a prime-to-$n$ ideal $\gra^{(n)}$.
For any two $O_F$-ideals $\gra, \grb$, we set
\[ C_{\gra, \grb}:=\delta_{\gra,(1)} 2^s, \]
where $\delta_{\gra,(1)}$ is the usual delta function and 
$s$ is the number of prime ideals $\grp$ dividing $\grb$. If there is
a unique prime ideal $\grp_2$ of $O_F$ lying over $2$ and the
conductor $\grf(B)$ is $\grp_2$-primary, then
\begin{equation}
  \label{eq:c}
  \begin{split}
  E_{B,\calD,\,\calN}& =E_{B,\calD_{(2)},
  \,\calN_{(2)}}\cdot E_{B,\calD^{(2)},
  \,\calN^{(2)}}=C_{\calD_{(2)},\,\calN_{(2)}}\cdot  E_{K,\!\calD^{(2)},\,
  \calN^{(2)}}.    
  \end{split}
\end{equation}    

We now have everything to compute the class number $h(\calO)$. 
Recall that $K_j=\qq(\sqrt{p}, \sqrt{-j})$ for $j\in\{1,2,3\}$.   
By Section~\ref{subsec:quotient-units-gp-cyclic} and
Proposition~\ref{prop:wK-prime-order-maximal}, if $p\equiv 1 \pmod{4}$
and $p>5$, then the only orders with nonzero 
contributions to the elliptic
part $\Ell(\calO)$ are $O_{K_1}$ and $O_{K_3}$, with
$w(O_{K_1})=2$ and $w(O_{K_3})=3$ respectively.  We have
\begin{equation}
  \label{eq:37}
  h(\calO)=M+\frac{1}{4}h(K_1)E_{K_1,\calD,\,\calN}+\frac{1}{3}h(K_3)E_{K_3,\calD,\,\calN}   
\end{equation}
for $p\equiv 1 \pmod{4}$ and $p>5$.
On the other hand, for  $p \equiv 3 \pmod{4}$ and $p> 5$, we have
calculated the following numerical invariants of all orders $B$ with
$w(B)>1$ 
(see Section~\ref{subsec:quotient-units-gp-cyclic}, 
Propositions~\ref{prop:wK-prime-order-maximal} 
and \ref{prop:list-of-order-p-cong-3-mod-4} and 
Lemma~\ref{lem:class-number-b12-b14}):\\

\renewcommand{\arraystretch}{1.5}
\noindent\begin{tabular}{|>{$}c<{$}||*{5}{>{$}c<{$}|}}
\hline
  p\equiv 3 \pmod{4}
  & O_{K_1} & B_{1,2} & B_{1,4} & O_{K_2} & O_{K_3}\\
\hline
h(B) & h(K_1)& \left(2-\left(\frac{2}{p}\right)\right)h(K_1)
&\left(2-\left(\frac{2}{p}\right)\right)h(K_1) & h(K_2) & h(K_3)\\
w(B) & 4 & 4 & 2 & 2 & 3\\
\hline
\end{tabular} \\

Therefore, by Eichler's class number formula we obtain
\begin{equation}
  \label{eq:36}
  \begin{split}
  h(\calO)=& M +
  \frac{5}{8}\left(2-\left(\frac{2}{p}\right)\right) h(K_1)\, 
  C_{\calD_{(2)},\,\calN_{(2)}}\,  
  E_{K_1,\!\calD^{(2)},\, \calN^{(2)}}+ \\
& \frac{3}{8}h(K_1)\,E_{K_1,\calD,\,\calN}+
\frac{1}{4}h(K_2)\,E_{K_2,\calD,\,\calN}
+\frac{1}{3}h(K_3)\,E_{K_3,\calD,\,\calN}    
  \end{split}
\end{equation}
for $p \equiv 3 \pmod{4}$ and $p> 5$.
For $p=2,3,5$, the formulas for $h(\calO)$ can be obtained in the same
way using Sections~\ref{subsec:quotient-units-gp-cyclic}, 
\ref{subsec:p-2-B2} and \ref{sec:3.7}. 
\end{sect}



\section{Quadratic proper  $\zz[\sqrt{p} ]$-orders in $K$}
\label{sec:quadr-prop-zzsqrtp-orders}
Throughout this section, we assume that $p\equiv 1\pmod{4}$ and let
$A=\zz[\sqrt{p} ]$. It is an order of index $2$ in
$O_F=\zz+\zz(1+\sqrt{p})/2$ with $A/2O_F\cong \ff_2$. 
We will classify all the quadratic proper $A$-orders $B$ satisfying
the following two conditions: 
\begin{enumerate}[(i)]
\item the fraction field of $B$ is a totally imaginary quadratic
  extension $K$ of $F$;
\item $w(B):=[B^\times:A^\times]>1$.
\end{enumerate}
First we need some knowledge about the group $A^\times$.



\begin{lem}\label{lem:p-equiv-1mod8-epsilon}
  If $p\equiv 1\pmod{8}$, then $A^\times= O_F^\times$. In particular,
  the fundamental unit $\epsilon\in A^\times$. 
\end{lem}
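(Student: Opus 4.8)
The plan is to prove the ostensibly stronger statement that the fundamental unit $\epsilon$ itself already lies in $A=\zz[\sqrt{p}]$. Granting this, conjugation-stability of $A$ gives $\iota(\epsilon)\in A$, and since $\Nm_{F/\qq}(\epsilon)=\epsilon\,\iota(\epsilon)=-1$ by Lemma~\ref{lem:norm} (we are in the case $p\equiv 1\pmod 4$), we get $\epsilon^{-1}=-\iota(\epsilon)\in A$; hence $\epsilon\in A^\times$, and therefore $A^\times\supseteq\langle -1,\epsilon\rangle=O_F^\times$. The reverse inclusion $A^\times\subseteq O_F^\times$ is immediate from $A\subseteq O_F$, so $A^\times=O_F^\times$ and in particular $\epsilon\in A^\times$, as claimed.

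I would first record the elementary observation that $2O_F\subseteq A$. Indeed $2=2\cdot 1\in A$ and $1+\sqrt{p}=2\cdot\tfrac{1+\sqrt{p}}{2}\in A$, and these two elements $\zz$-span $2O_F=2\zz\oplus(1+\sqrt{p})\zz$; this is of course consistent with the stated fact $A/2O_F\cong\ff_2$. It therefore suffices to show $\epsilon\equiv 1\pmod{2O_F}$, for then $\epsilon-1\in 2O_F\subseteq A$ and hence $\epsilon\in A$.

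The key step is where the hypothesis $p\equiv 1\pmod 8$ enters: for such a prime, $2$ splits in $F=\qq(\sqrt{p})$ (the standard decomposition of $2$ in a real quadratic field of discriminant $p\equiv 1\pmod 8$), so $O_F/2O_F\cong\ff_2\times\ff_2$, whose unit group is trivial. Reduction modulo $2O_F$ is a ring homomorphism $O_F\to O_F/2O_F$, hence carries the unit $\epsilon$ to a unit of $O_F/2O_F$, i.e.\ to $1$. Thus $\epsilon\equiv 1\pmod{2O_F}$, and combined with the previous paragraph this finishes the argument.

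There is essentially no obstacle to overcome; the only point meriting a line of justification is the behaviour of $2$ in $F$, equivalently that $(O_F/2O_F)^\times$ is trivial precisely when $p\equiv 1\pmod 8$. If one prefers to avoid invoking the splitting of $2$, an alternative is a direct congruence: write $\epsilon=a+b\omega$ with $\omega=(1+\sqrt{p})/2$ and compute $\Nm_{F/\qq}(\epsilon)=a^2+ab-b^2(p-1)/4$; since $p\equiv 1\pmod 8$ forces $2\mid (p-1)/4$, reducing the identity $\Nm_{F/\qq}(\epsilon)=-1$ modulo $2$ gives $a(a+b)\equiv 1\pmod 2$, so $a$ and $a+b$ are both odd, $b$ is even, and $\epsilon=a+(b/2)(1+\sqrt{p})\in\zz[\sqrt{p}]=A$. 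Either route yields $\epsilon\in A^\times$ and hence $A^\times=O_F^\times$.
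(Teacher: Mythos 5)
Your proposal is correct and follows essentially the same route as the paper: for $p\equiv 1\pmod 8$ the prime $2$ splits in $O_F$, so $(O_F/2O_F)^\times$ is trivial, every unit is $\equiv 1\pmod{2O_F}\subseteq A$, and hence lies in $A^\times$. The only cosmetic differences are that the paper applies this to all $u\in O_F^\times$ at once rather than to $\epsilon$ alone, and your closing congruence computation is an (equivalent) elementary alternative not present in the paper.
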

\begin{proof}
By our assumption on $p$, $2O_F=\p_1\p_2$, where $\p_1$ and $\p_2$
are maximal ideals of $O_F$ with residue fields
$O_F/\p_1=O_F/\p_2=\ff_2$. Therefore, 
\[(O_F/2O_F)^\times \cong (O_F/\p_1)^\times\times
(O_F/\p_2)^\times\]
is a trivial group. We have $u\equiv 1\pmod{2O_F}$ for any $u\in
O_F^\times$. Hence $u\in A\cap O_F^\times=A^\times$. 
\end{proof}

\begin{sect}\label{subsec:fund-unit-in-A-or-not}
  If $p\equiv 5\pmod{8}$, $2$ is inert in $O_F$, and we have
  $(O_F/2O_F)^\times \simeq \ff_4^\times \simeq \zmod{3}$. Let
  $U^{(1)}$ be the kernel of the map $O_F^\times\to
  (O_F/2O_F)^\times$. Since $(A/2O_F)^\times $ is the trivial
  subgroup of $(O_F/2O_F)^\times$, we have $A^\times =U^{(1)}$.
  If $\epsilon\in A$, then $O_F^\times=A^\times=U^{(1)}$; otherwise,
  $O_F^\times/A^\times \simeq \zmod{3}$, and $O_F^\times\to (O_F/2O_F)^\times$ is
  surjective.  Here we are in a more complicated situation since both
  cases may occur, and whether $\epsilon\in A^\times$ or not can no
  longer be determined by a simple congruence condition on $p$. The list of
  $p\equiv 5 \pmod{8}$ and $p<1000$ such that $\epsilon \in A^\times$
  are given bellow:
  \[37, \, 101,\, 197, \,269, \,349, \,373,\, 389, \,557, \,677,
  \,701, \,709, \,757,\, 829,\, 877, \,997.\] This is the sequence
  A130229 in the OEIS \cite{oeisA130229}. For any $p\equiv 1
  \pmod{4}$, we define
\begin{equation}
  \label{eq:4}
  \varpi:=[O_F^\times:A^\times]\in \{ 1, 3\}. 
\end{equation}
By Lemma~\ref{lem:p-equiv-1mod8-epsilon}, $\varpi=1$ if $p\equiv
1\pmod{8}$. 
\end{sect}

\begin{sect}
  Let $A_+^\times\subset A^\times$
  be the subgroup consisting of all the totally positive elements of
  $A^\times$.  We claim that
  \begin{equation}\label{xeq:28}
A_+^\times= (A^\times)^2.     
  \end{equation}
  If $\epsilon\in A$, then $A^\times= O_F^\times=\dangle{\epsilon}\times
  \{\pm 1\}$. Since $\epsilon$ is not totally positive by
  Lemma~\ref{lem:norm}, we have
  $A_+^\times=\dangle{\epsilon^2}=(A^\times)^2$. If $\epsilon\not\in
  A$, then $A^\times=\dangle{\epsilon^3}\times \{\pm 1\}$ by
  Section~\ref{subsec:fund-unit-in-A-or-not}. It follows that
  $A_+^\times=\dangle{\epsilon^6}=(A^\times)^2$. So either way,
  (\ref{xeq:28}) holds. 
\end{sect}

\begin{lem}\label{lem:possible-fields-K1-K3}
  Let $K$ be a totally imaginary quadratic extension of $F$ such that
  there exists a quadratic proper  $A$-order $B\subset K$ with
  $w(B)>1$.  Then $K$ is necessarily one of the following
  \[K_1=\qq(\sqrt{p}, \sqrt{-1}), \qquad K_3=\qq(\sqrt{p},
  \sqrt{-3}).\]  
Moreover, if $K=K_1$, then $B\supseteq \zz[\sqrt{p},\sqrt{-1}]$. 
\end{lem}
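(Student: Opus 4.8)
The plan is to combine two observations. First, because $B$ is a \emph{proper} $A$-order we have $B^\times\cap F^\times=A^\times$ (a unit of $B$ lying in $F$ lies in $B\cap F=A$, and so does its inverse), so the natural map $B^\times/A^\times\to O_K^\times/O_F^\times$ is injective and hence $w(B)=[B^\times:A^\times]$ divides $[O_K^\times:O_F^\times]=w_K$. Second, since $p\equiv 1\pmod 4$, Proposition~\ref{prop:criterion-for-jK} gives $Q_{K/F}=1$ for every totally imaginary quadratic extension $K/F$, so by (\ref{xeq:15}) and (\ref{xeq:14}) the quotient $O_K^\times/O_F^\times$ is cyclic of order $w_K=n$ (writing $\bmu_K=\dangle{\zeta_{2n}}$), generated by the image of $\zeta_{2n}$. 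Thus $w(B)>1$ forces $w_K>1$, and the classification table in Section~\ref{subsec:quotient-units-gp-cyclic} then leaves exactly the possibilities $K=K_1$ (with $w_K=2$), $K=K_3$ (with $w_K=3$), and -- only when $p=5$ -- $K=\qq(\zeta_{10})$ (with $w_K=5$). (Recall $p\equiv 1\pmod 4$ forces $p\geq 5$.) It remains to rule out $\qq(\zeta_{10})$ and to establish the ``moreover'' clause.

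For this I would treat uniformly the fields with $3\nmid w_K$, i.e. $K_1$ and $\qq(\zeta_{10})$. Put $m:=w(B)>1$; then $m\mid w_K$, so $3\nmid m$, and the image of $B^\times$ in the cyclic group $O_K^\times/O_F^\times$ of order $n=w_K$ is its unique subgroup of order $m$, which is generated by the class of $\zeta:=\zeta_{2n}^{\,n/m}$, a primitive $2m$-th root of unity. Since $[\zeta]$ lies in this image, there is $b\in B^\times$ with $b\equiv\zeta\pmod{O_F^\times}$; write $b=\zeta u$ with $u\in O_F^\times$. Using $\zeta^m=\zeta_{2n}^{\,n}=-1$ we get $u^m=-b^m\in B\cap F=A$, hence $u^m\in A^\times$. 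So the class of $u$ in $O_F^\times/A^\times$ has order dividing $m$; but $|O_F^\times/A^\times|=\varpi\in\{1,3\}$ by Section~\ref{subsec:fund-unit-in-A-or-not} and $\gcd(m,3)=1$, so $u\in A^\times\subseteq B^\times$. Therefore $\zeta=bu^{-1}\in B$.

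Now I would conclude as follows. If $K=\qq(\zeta_{10})$ then $m=5$ and $\zeta$ is a primitive $10$-th root of unity, so $B\supseteq\zz[\zeta]=\zz[\zeta_{10}]=O_K$ (the latter being the ring of integers of $\qq(\zeta_{10})$, as used in Proposition~\ref{prop:wK-prime-order-maximal}); hence $B=O_K$ and $B\cap F=O_F\supsetneq A$, contradicting that $B$ is a proper $A$-order. Thus $K\neq\qq(\zeta_{10})$, and $K\in\{K_1,K_3\}$. If $K=K_1$ then $m=2$ and $\zeta=\zeta_4=\sqrt{-1}$, so $\sqrt{-1}\in B$; combined with $\sqrt p\in A\subseteq B$ this gives $\zz[\sqrt p,\sqrt{-1}]\subseteq B$, which is the assertion.

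The only point requiring care is that the uniform argument of the second paragraph needs $3\nmid w_K$, and this fails for $K_3$; but $K_3$ is an admissible conclusion, so there is nothing to exclude there and the hypothesis $3\nmid w_K$ is invoked only for $K_1$ and $\qq(\zeta_{10})$, where it holds. Beyond that, the whole argument is bookkeeping with the cyclic group $O_K^\times/O_F^\times$, the divisibility $w(B)\mid w_K$, and the defining property $B\cap F=A$ of a proper $A$-order.
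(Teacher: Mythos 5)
Your argument is correct, but it reaches the conclusion by a genuinely different route than the paper. The paper's proof is built around the map $\phi_K\colon u\mapsto u/\iota(u)$ of (\ref{xeq:51}): it disposes of $\qq(\zeta_{10})$ by inspecting the subgroups of $O_K^\times/A^\times\cong\zmod{3}\oplus\zmod{5}$ directly (any nontrivial subgroup forces $\epsilon\in B$ or $\zeta_{10}\in B$, either of which violates properness), notes that $\phi_K(B^\times)\neq\{1\}$, and then splits according to whether $-1\in\phi_K(B^\times)$; in the affirmative case it uses $A_+^\times=(A^\times)^2$ from (\ref{xeq:28}) to rescale a unit $u$ with $-u^2=\Nm_{K/F}(u)$ into a genuine $\sqrt{-1}\in B$, which gives both $K=K_1$ and the ``moreover'' clause, while the negative case forces $\bmu_K\neq\{\pm1\}$ and leaves $K_3$ by elimination. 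You instead lean on the divisibility $w(B)\mid w_K$ coming from the injection $B^\times/A^\times\hookrightarrow O_K^\times/O_F^\times$, reduce to the three fields of the table in Section~\ref{subsec:quotient-units-gp-cyclic}, and then run a single computation for the two fields with $3\nmid w_K$: a generator of the image of $B^\times$ is $\zeta u$ with $\zeta$ a root of unity and $u\in O_F^\times$, and the coprimality of $w(B)$ with $\varpi=[O_F^\times:A^\times]\in\{1,3\}$ from Section~\ref{subsec:fund-unit-in-A-or-not} forces $u\in A^\times$, hence $\zeta\in B$. What this buys is uniformity (the exclusion of $\qq(\zeta_{10})$ and the ``moreover'' clause fall out of the same step) and it substitutes the coprimality with $\varpi$ for the paper's appeal to $A_+^\times=(A^\times)^2$ --- two facts which ultimately both rest on $\Nm_{F/\qq}(\epsilon)=-1$. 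One micro-step you elide: from $u^m=-b^m\in B\cap F=A$ you conclude $u^m\in A^\times$, which also requires $(u^m)^{-1}\in A$; this is immediate from $(u^m)^{-1}=-b^{-m}\in B\cap F=A$, so nothing is lost.
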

\begin{proof}
 By Section~\ref{subsec:list-of-roots-of-unity}, it
  is enough to show that $\bmu_K\neq \{\pm 1\}$, and $K\neq
  \qq(\zeta_{10})$ if $p=5$.

  First, if $p=5$, the fundamental unit
  $\epsilon=(1+\sqrt{5})/2\not\in A$, and by
  Section~\ref{subsec:fund-unit-in-A-or-not}, $O_F^\times/A^\times\cong
  \zmod{3}$.  Assume $K=\qq(\zeta_{10})$, then
  \[\{1\}\varsubsetneq B^\times/A^\times \subseteq
  O_K^\times/A^\times=\dangle{\bar{\epsilon}}\oplus
  \dangle{\bar{\zeta}_{10}}\cong \zmod{3}\oplus \zmod{5}, \] 
  where
  $\bar{\epsilon}$ and $\bar{\zeta}_{10}$ denote the image of
  $\epsilon$ and $\zeta_{10}$ respectively in the quotient
  $O_K^\times/A^\times$.  Note that $B^\times/A^\times$ can not
  contain the subgroup $\dangle{\bar{\epsilon}}\cong \zmod{3}$. Otherwise,
  $B\ni \epsilon$, which implies that $B\supset \zz[\epsilon]=O_F$,
  contradicting the assumption that $B$ is a proper $A$-order.  On
  the other hand, if $B^\times/A^\times \supseteq
  \dangle{\bar{\zeta}_{10}}\cong \zmod{5}$, then $B\ni \zeta_{10}$. Hence
  $B\supseteq \zz[\zeta_{10}]$, which is the maximal order in
  $K=\qq(\zeta_{10})$. Again this leads to a contradiction to the
  assumption on $B$. We conclude that $K\neq \qq(\zeta_{10})$ if
  $p=5$.

  Recall that  $\bmu_K\supseteq \phi_K(B^\times)$, where
  $\phi_K: u\mapsto u/\iota(u)$ is the map given in
  (\ref{xeq:51}). Clearly, $\phi_K(B^\times)\neq \{1\}$. Otherwise,
  $B^\times \subseteq O_F^\times\cap B=A^\times$, contradicting the
  assumption that $w(B)>1$.  

  Suppose that $-1=\phi_K(u)$ for some $u\in B^\times$. We have
  $-u^2=\Nm_{K/F}(u)\in A_{+}^\times$, the group of totally positive
  units of $A$. Since $A_+^\times= (A^\times)^2$ by (\ref{xeq:28}),
  multiplying $u$ by a suitable element of $A^\times$, we may assume
  that $u^2=-1$. Therefore, $K=K_1=F(\sqrt{-1})$. On the other hand,
  if $K=K_1$, then by Section~\ref{subsec:intro-of-j-K-over-F},
  $\phi_K(O_K^\times)=\bmu_K^2=\{\pm 1\}$ since $Q_{K/F}=1$.  Therefore,
  $\phi_K(u)=-1$ for all $u\in B^\times-A^\times$. We have in fact
  shown that $B\ni \sqrt{-1}$ for all proper $A$-orders in $K_1$ with
  $w(B)>1$.

  Lastly, if $-1\not\in \phi_K(B^\times)$, then $\phi_K(B^\times)$
  contains a root of unity which is not in $F$. In particular,
  $\bmu_K\neq \{\pm 1\}$ and $w_K> 1$. By
  Section~\ref{subsec:list-of-roots-of-unity}, we must have
  $K=K_3=F(\sqrt{-3})$ since all other possibilities have been
  exhausted.
\end{proof}

\begin{sect}
  Suppose that $K=K_1$. It has been shown in
  Lemma~\ref{lem:possible-fields-K1-K3} that $B\supseteq
  B_{1,4}=\zz[\sqrt{p}, \sqrt{-1} ]$. By
  Section~\ref{subsec:orders-in-K1-containing-B4}, \[B_{1,2}=
  \zz+\zz\sqrt{p}+\zz\sqrt{-1}+\zz(1+\sqrt{-1})(1+\sqrt{p})/2 \] is
  the only other proper $A$-order that contains $B_{1,4}$.  The class
  numbers of $B_{1,2}$ and $B_{1,4}$ can be calculated exactly in the
  same way as in Lemma~\ref{lem:class-number-b12-b14}.  Let $B$ be
  either $B_{1,2}$ or $B_{1,4}$.  If $\epsilon\in A$, then
  $O_K^\times/A^\times=O_K^\times/O_F^\times \cong \zmod{2}$. Hence
  $B^\times=O_K^\times$.  If $\epsilon\not\in A^\times$,
  $O_K^\times/A^\times \cong \zmod{6}$, with the cyclic subgroup of
  order $3$ generated by $\bar{\epsilon}$.  Since $\epsilon\not\in B$,
  we must have $B^\times/A^\times\cong \zmod{2}$ in this case as well.
  Therefore,
  \begin{equation}
    \label{eq:55}
w(B_{1,2})=w(B_{1,4})=2.    
  \end{equation}
  Using $[O_K^\times: A^\times]=2\varpi$, we obtain
\begin{equation}
  \label{eq:56}
  h(B_{1,2})=\frac{1}{\varpi}\left(2-\left(\frac{2}{p}\right)\right)
  h(O_{K_1})
  \ \text{and}\ 
  h(B_{1,4})=\frac{2}{\varpi}\left(2-\left(\frac{2}{p}\right)\right)
  h(O_{K_1}). 
\end{equation}
\end{sect}

\begin{sect}
  Suppose that $K=K_3$.  By Exercise 42 of \cite[Chapter
  2]{MR0457396}, a $\zz$-basis of $\OO_{K_3}$ is
\begin{equation}
  \label{xeq:39}
  \left\{1,\quad \omega_p=\frac{1+\sqrt{p}}{2}, \quad \zeta_6=\frac{1+\sqrt{-3}}{2}, \quad
\omega_p\zeta_6=   \frac{(1+\sqrt{p} )(1+\sqrt{-3} )}{4}\right\}.
\end{equation}
Note that $2$ is inert in $L:=\qq(\zeta_6)=\qq(\sqrt{-3})\subset K$.
There are two primes $\p_1,\p_2$ above $2O_L$ in $K$. Both have
residue fields $O_K/\p_1\simeq
O_K/\p_2\simeq\ff_4$. Therefore, $O_L/2O_L\simeq \ff_4$ embeds
diagonally\footnote{Since the isomorphisms $O_K/\p_i\simeq \ff_4$ is
  \textit{not} canonical, the diagonal of $(O_K/\p_1)\times
  (O_K/\p_2)$ depends on the choice of $(O_K/\p_1)\simeq
  (O_K/\p_2)$. Here both of them are identified naturally with
  $O_L/2O_L$. In Section~\ref{subsec:the-B-3-2-order}, we 
   have a different diagonal. However, whichever diagonal we choose,
 the prime field $A/2O_F\cong \ff_2$ embeds canonically in it.} into
  \begin{equation}
    \label{eq:3}
    O_K/2O_K\cong (O_K/\p_1)\times
  (O_K/\p_2)\simeq \ff_4\times \ff_4. 
  \end{equation}

  Suppose that $B\supseteq B_{3,4}:=\zz[\sqrt{p}, \zeta_6]$.  Since
  $B_{3,4}/2O_K$ is a $2$-dimensional $\ff_2$-vector space spanned by
  the images of $1$ and $\zeta_6$, we have a canonical isomorphism
  $B_{3,4}/2O_K\cong O_L/2O_L$. The only other subring of $\ff_4\times
  \ff_4$ containing the diagonal is $\ff_4\times \ff_4$ itself. It
  follows that $B_{3,4}$ is the only proper $A$-order in $K$
  containing $\zeta_6$.

  We calculate the class number of $B_{3,4}$ using
  (\ref{xeq:37}) with $\a=2O_K$. It has already been shown that
  $(B_{3,4}/2O_K)^\times \simeq \ff_4^\times \simeq \zmod{3}$, and 
  \begin{equation}
    \label{eq:2}
    (O_K/2O_K)^\times \cong (O_K/\p_1)^\times\times
  (O_K/\p_2)^\times\simeq (\zmod{3})^2.
  \end{equation}
If $\epsilon\in A$, then $O_K^\times =
  B_{3,4}^\times$; otherwise, $O_K^\times/B_{3,4}^\times$ is a
  cyclic group of order $3$, generated by the image of $\epsilon$. It
  follows that 
  \begin{equation}
    \label{xeq:44}
w(B_{3,4})=3,\qquad    h(B_{3,4})=\frac{3h(O_{K_3})}{\varpi}=
    \begin{cases}
      3h(O_{K_3}) &\qquad \text{ if } \epsilon \in A;\\
       h(O_{K_3}) &\qquad \text{ if } \epsilon \not\in A.\\
    \end{cases}
  \end{equation}
\end{sect}

\begin{sect}
  Suppose that $K=K_3=\qq(\sqrt{p}, \sqrt{-3} )$, and $\varpi=1$. In
  other words, we assume $\epsilon\in A^\times$ and
  $O_F^\times=A^\times$. For example, this is the case 
  if $p\equiv 1\pmod{8}$ by Lemma~\ref{lem:p-equiv-1mod8-epsilon}. 
  For any quadratic proper 
  $A$-order $B$ with $w(B)>1$, we have
\[\{1\}\varsubsetneq B^\times/A^\times \subseteq 
O_K^\times/A^\times \simeq  \zmod{3}. \] 
Hence, $B^\times = O_K^\times$, and $B\supseteq \zz[\sqrt{p},
  \zeta_6]$. It follows that $B_{3,4}$ is the only proper
  $A$-order with $w(B)>1$ in this case. 
\end{sect}

\begin{sect}\label{subsec:the-B-3-2-order}
  Suppose that $K=K_3=\qq(\sqrt{p}, \sqrt{-3} )$, and $\varpi=3$. By
  an abuse of notation, we still write $\epsilon$ and $\zeta_6$ for
  their images in $O_K^\times/A^\times$.  Then
\[\{1\}\varsubsetneq B^\times/A^\times \subseteq O_K^\times/A^\times=
\dangle{\epsilon, \zeta_6} \simeq
  (\zmod{3})^2.\]
Since $\epsilon\not\in B$, $B^\times/A^\times$ is one of the following
cyclic subgroup of order $3$ in $O_K^\times/A^\times$:
$\dangle{\epsilon\zeta_6},  \dangle{\epsilon\zeta_6^{-1}}, 
\dangle{\zeta_6}.$ 

The case $B\ni \zeta_6$ has already been treated in the previous
subsections. So we focus on the orders
\[B_{3,2}:=A[\epsilon\zeta_6]=\zz[\sqrt{p}, \epsilon\zeta_6], 
\qquad B_{3,2}':=A[\epsilon\zeta_6^{-1}]=
\zz[\sqrt{p}, \epsilon\zeta_6^{-1}]. \]
Clearly $B_{3,2}'$ coincides with the complex conjugation of
$B_{3,2}$. 

Since $(\epsilon\zeta_6)^3=-\epsilon^3\in A$, the order $B_{3,2}$ is
generated as a $A$-module by the set $\{1, \epsilon\zeta_6,
\epsilon^2\zeta_6^2\}$. We claim that $B_{3,2}\supset 2O_K$. A
$\zz$-basis of $O_K$ is given in (\ref{xeq:39}). Clearly, $2\in A$ and
$2\omega_p\in A$ with $\omega_p=(1+\sqrt{p})/2$. Let
$a=\Tr_{F/\qq}(\epsilon)$ and recall that $\Nm_{F/\qq}(\epsilon)=-1$,
we have $\epsilon^2=a\epsilon+1$. Therefore,
\[\epsilon^2\zeta_6^2=(a\epsilon+1)
(\zeta_6-1)=a\epsilon\zeta_6+\zeta_6-a\epsilon-1.
\] 
It follows that $B_{3,2}$ is also generated over $A$ by 
$\{1, \epsilon\zeta_6, \zeta_6-a\epsilon\}$. 
Since $2a\epsilon\in A$, we have
$2\zeta_6= 2(\zeta_6-a\epsilon)+2a\epsilon\in B_{3,2}$. 
Lastly, we
need to show that $2\omega_p\zeta_6\in B_{3,2}$. Since
$\epsilon\not\in A$, there exists $x\in A$ such that
$\epsilon=x+\omega_p$. Note that $2x\zeta_6\in B_{3,2}$ because
$2\zeta_6\in B_{3,2}$, so
$2\omega_p\zeta_6=2(\epsilon-x)\zeta_6=2\epsilon\zeta_6-2x\zeta_6\in
B_{3,2}$. This finishes the proof of our claim. 

Next, we show that $B_{3,2}$ and $B_{3,2}'$ are indeed proper
$A$-orders and calculate their class numbers. Since $p\equiv 5
\pmod{8}$, we have $O_F/2O_F\simeq \ff_4$, which is generated by the
image of $\epsilon$ over $A/2O_F\cong \ff_2$. Denote this image by
$\bar{\epsilon}$. Recall that $O_K=O_F[\zeta_6]$, so
\[O_K/2O_K\simeq \ff_4[t]/(t^2-t+1)\simeq \ff_4\times \ff_4,\] sending
$t\mapsto (\bar\epsilon, \bar\epsilon+1)$. One checks that
$B_{3,2}/2O_K=\ff_4\times \ff_2$, and $B_{3,2}'=\ff_2\times \ff_4$. In
particular, they do not contain the diagonal of $\ff_4\times \ff_4$,
which is identified with $O_F/2O_F$. Thus both $B_{3,2}$ and
$B_{3,2}'$ are proper $A$-orders of index $2$ in $O_K=O_{K_3}$, 
conforming with the convention of our notations. In particular,
\begin{equation}\label{eq:5}
w(B_{3,2})=w(B_{3,2}')=3.
\end{equation}
Using (\ref{xeq:37}), one
sees that
\begin{equation}
  \label{eq:6}
h(B_{3,2})=h(B_{3,2}')=h(O_{K_3}).
\end{equation}
\end{sect}

\section*{Acknowledgements}
The authors thank Markus Kirschmer and Yifan Yang for very helpful
discussions. The revision of the present manuscript is made during
CF Yu's stay at the Max-Planck-Institut f\"ur Mathematik. He is
grateful to the Institute for kind hospitality and 
excellent working conditions. 
The authors also thank the referee for his/her careful reading and 
helpful comments.
J.~Xue was partially supported by the grant NSC 102-2811-M-001-090. 
TC Yang and CF Yu are partially supported by the grants MoST
100-2628-M-001-006-MY4, 103-2811-M-001-142 and 103-2918-I-001-009.

\end{document}